\DeclareMathOperator{\supp}{supp}
\def\C{\mathbb C}
\def\R{\mathbb R}
\DeclarePairedDelimiter\norm{\lVert}{\rVert}
\newtheorem{theorem}{Theorem}[section]
\newtheorem{lemma}[theorem]{Lemma}
\newtheorem{proposition}[theorem]{Proposition}
\theoremstyle{definition}
\theoremstyle{remark}
\numberwithin{equation}{section}
\newcommand {\p} {\partial}
\newcommand{\eps}{\epsilon}
\newcommand{\e}{\eta_{\delta,r}}
\newcommand{\re}{\mathrm{Re}}
\newcommand{\im}{\mathrm{Im}}
\newcommand{\ccdot}{\,\cdot\,}
\newcommand{\s}{\hspace{0.5pt}}
\newcommand{\abs}[1]{\lvert #1 \rvert}
\title{An inverse problem for the Riemannian minimal surface equation}
\author[C. Carstea]{C\u{a}t\u{a}lin I. C\^{a}rstea}
\author[M. Lassas]{Matti Lassas}
\author[T. Liimatainen]{Tony Liimatainen}
\author[L. Oksanen]{Lauri Oksanen}
\begin{document}
%\begin{abstract}
%\end{abstract}
\maketitle
%%%%%%%%%%%%%%%%%%

	\begin{abstract}
	In this paper we consider determining a minimal surface embedded in a Riemannian manifold $\Sigma\times \R$. We show that if $\Sigma$ is a two dimensional Riemannian manifold with boundary, then the knowledge of the associated Dirichlet-to-Neumann map for the minimal surface equation determine $\Sigma$ up to an isometry. 
% 	
% 	the boundary of given a conformally transversally anisotropic manifold $(M,g)$, we consider the semilinear elliptic equation
% 	$$(-\Delta_{g}+V)u+qu^2=0\quad \text{on $M$}.$$
% 	We show that an a priori unknown smooth function $q$ can be uniquely determined from the knowledge of the associated Dirichlet-to-Neumann map for the above equation. This extends the previously known results of the works~\cite{FO19,LLLS2019inverse}.
% 	

		\medskip
		
		\noindent{\bf Keywords.} Inverse problems, quasilinear elliptic equation, Riemannian manifold, Riemannian surface, minimal surface, higher order linearization.
		
		%\noindent{\bf Mathematics Subject Classification (2010)}: 
		
	\end{abstract}

	\tableofcontents

\section{Introduction}
In this paper we study an inverse problem for the minimal surface equation.  
Let $(\Sigma,g)$ be a smooth compact Riemannian manifold of dimension $n=2$ with a smooth boundary $\p \Sigma$. Let
\[
 Y\subset \R\times \Sigma
\]
be a minimal surface embedded in the $\R\times \Sigma$, which we assume to be equipped with the product metric $e\oplus g$. Here $e$ is the Euclidean metric on $\R$. If $Y$ is given as a graph of a function $u$ by $Y=\{(x,u(x)):x\in \Sigma\}$, then the function $u$ satisfies the divergence form equation  
 \begin{equation}\label{eq:minimal_surf_intro}
\begin{aligned}
\begin{cases}
\text{div}_g\left[\frac{\nabla u}{(1+\abs{\nabla u}_g^2)^{1/2}}\right]=0, 
&\text{ in } \Sigma,
\\
u= f 
&\text{ on }\p \Sigma,
\end{cases}
    \end{aligned}
\end{equation}
% 
% 
% \begin{equation}\label{eq:minimal_surf_intro}
%  \text{div}_g\left[\frac{\nabla u}{(1+\abs{\nabla u}_g^2)^{1/2}}\right]=0 \text{ on } \Sigma,
% \end{equation}
where $\text{div}_g$ and $\abs{\ccdot}_g$ are defined by using the Riemannian metric $g$. The equation \eqref{eq:minimal_surf_intro} is quasilinear elliptic and it is called the minimal surface equation. As discussed in the Appendix, we define 
that the
minimal surfaces (or more precisely, the variational minimal surfaces) are the critical points of the area functional and the graphs
 $\{(u(x),x)\:\ x\in \Sigma\}\subset \R\times \Sigma$ of the smooth solutions $u:\Sigma\to \R$ of
 the equation equation \eqref{eq:minimal_surf_intro} are minimal surfaces.
 %We assume that $(M,g)$ is {\em conformally transversally anisotropic} (CTA), that is to say, 
%\begin{equation}
%\label{M_1}
%M \Subset I \times M_0, 
%\end{equation}
%and the metric $g$ has a smooth extension to $\R\times M_0$ so that
%\begin{equation}
%\label{M_2}
%g= c(x_1,x')(\,dx_1\otimes\,dx_1+ g_0(x')),
%\end{equation}
%where $(M_0,g_0)$ is a compact $(n-1)$-dimensional Riemannian manifold with a smooth boundary $\p M_0$ \cite{DosSantosFerreira2009}. Let $q, V$ be real-valued smooth functions on $M$ and consider the semi-linear elliptic equation:
% 
% \begin{equation}\label{pf}
% \begin{aligned}
% \begin{cases}
% (-\Delta_{g}+V) u+qu^2=0, 
% &\text{ in } M
% \\
% u= f 
% &\text{ on }\p M
% \end{cases}
%     \end{aligned}
% \end{equation}
%We make the standing assumption that $0$ is not a Dirichlet eigenvalue for the operator $-\Delta_g+V$. 
By following the proofs of \cite[Proposition 2.1]{LLLS2019inverse} and \cite{CFKKU}%\f{Catalin, did you have well-posedness in your paper? If, please add here.} 
one can show that the equation \eqref{eq:minimal_surf_intro} is well-posed for sufficiently small Dirichlet data $f$. Precisely, given any $\alpha \in (0,1)$, there exists $C,\delta >0$ such that for all 
$$ f \in U_\delta=\left\{h \in C^{2,\alpha}(\p M)\,|\, \|f\|_{C^{2,\alpha}(\p \Sigma)} \leq \delta \right\},$$ 
the equation \eqref{eq:minimal_surf_intro} has a unique solution $u$ in the set 
\begin{equation}
\label{u_energy}
\left\{w \in C^{2,\alpha}(\Sigma)\,|\, \norm{w}_{C^{2,\alpha}(\Sigma)}\leq C\delta \right\}.
\end{equation}
Moreover,
\[
\norm{u}_{C^{2,\alpha}(\Sigma)} \leq C\norm{f}_{C^{2,\alpha}(\p \Sigma)}.
\]
We define the associated \emph{Dirichlet-to-Neumann} map (DN map in short) for \eqref{eq:minimal_surf_intro} by
\begin{equation}\label{eq:DN_map_for_nonlin}
\Lambda_g f = \left. \p_\nu u \right|_{\p \Sigma} \text{ for }  \, f \in U_\delta,
\end{equation}
where $u$ is the unique solution to \eqref{eq:minimal_surf_intro} that lies in the set \eqref{u_energy} and $\nu$ denotes the unit outward normal vector field on $\p \Sigma$. We note that $\Lambda_g$ is invariant under isometries of $(\Sigma,g)$ that fix the boundary. The boundary $\p \Sigma$  inherits from $(\Sigma,g)$
the metric $g_{\p \Sigma}=i^*g$, where $i:\p\Sigma\to \Sigma$ is the identical embedding and $i^*g$ is the pull-back of the metric on $\Sigma$. 
The minimal surface equation is however not invariant under conformal scalings in general. We note that the DN map $\Lambda_g$ is determined by the areas of the minimal surfaces  and the pair $(\p\Sigma,g_{\p \Sigma})$. 

%\HOX{Move Proposition \ref{minimal surfaces and the DN map} to the main text.}

%\HOX{Lauri: here $h = f$}
\begin{proposition} \label{minimal surfaces and the DN map}
Assume that $\epsilon>0$  is so small that for all $h$ in the set
$\mathcal W_\epsilon=\{h\in C^\infty(\p \Sigma):\|h\|_{C^2(\p \Sigma)}<\e\}$
the minimal surface equation \eqref{eq:minimal_surf_intro} has a solution $u_h:\Sigma\to \R$.
Let $Y(h)=\{(u_h(x),x):\ x\in \Sigma\}$  be the minimal surface with the boundary value $h$.
Then the boundary $\p \Sigma$, its metric
$g_{\p\Sigma}$, and 
the areas $ \text{Area}(Y(h))$ of the minimal surfaces, given  for all
$h\in \mathcal W_\epsilon$, determine the values of Dirichlet-to-Neumann map $\Lambda_g(h)$ for 
$h\in \mathcal W_\epsilon$.
\end{proposition}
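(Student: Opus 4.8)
The plan is to recover $\Lambda_g(h)$ from the derivative of the area functional along perturbations of the boundary data. Recall from the Appendix that the graph $Y(h)=\{(u_h(x),x):x\in\Sigma\}$ in the product metric $e\oplus g$ has area
\[
\text{Area}(Y(h))=A(u_h),\qquad A(v):=\int_\Sigma\sqrt{1+\abs{\nabla v}_g^2}\;dV_g ,
\]
where $u_h$ solves \eqref{eq:minimal_surf_intro} with boundary value $h$. The first step is to note that, by the well-posedness discussed above together with the implicit function theorem (as in \cite{LLLS2019inverse,CFKKU}), for fixed $h\in\mathcal W_\epsilon$ and $w\in C^\infty(\p\Sigma)$ the map $\tau\mapsto u_{h+\tau w}$ is $C^1$ into $C^{2,\alpha}(\Sigma)$ for $\tau$ near $0$; hence $F(\tau):=\text{Area}(Y(h+\tau w))$ is differentiable, and since $h+\tau w\in\mathcal W_\epsilon$ for small $\tau$, the derivative $F'(0)$ is computable from the given area data.

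Next I would compute $F'(0)$ by the chain rule. Write $\dot u:=\p_\tau u_{h+\tau w}\big|_{\tau=0}$; differentiating the boundary condition $u_{h+\tau w}|_{\p\Sigma}=h+\tau w$ gives $\dot u|_{\p\Sigma}=w$. Then
\[
F'(0)=DA(u_h)[\dot u]=\int_\Sigma\frac{g(\nabla u_h,\nabla\dot u)}{\sqrt{1+\abs{\nabla u_h}_g^2}}\;dV_g .
\]
Integrating by parts and using that $u_h$ satisfies the minimal surface equation \eqref{eq:minimal_surf_intro}, the interior divergence term vanishes and only the boundary term survives:
\[
F'(0)=\int_{\p\Sigma}\frac{\p_\nu u_h}{\sqrt{1+\abs{\nabla u_h}_g^2}}\;w\;dS_g=\int_{\p\Sigma}\frac{\Lambda_g(h)}{\sqrt{1+\abs{\nabla u_h}_g^2}}\;w\;dS_g .
\]
Since $w\in C^\infty(\p\Sigma)$ is arbitrary and the arclength measure $dS_g$ on the one-dimensional manifold $\p\Sigma$ is determined by $g_{\p\Sigma}$, the data determine the function $\Phi:=\Lambda_g(h)\big(1+\abs{\nabla u_h}_g^2\big)^{-1/2}$ on $\p\Sigma$.

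It remains to solve for $\Lambda_g(h)$ pointwise. On $\p\Sigma$ the gradient splits into its tangential and normal parts, so
\[
\abs{\nabla u_h}_g^2\big|_{\p\Sigma}=\abs{\nabla^{g_{\p\Sigma}}h}_{g_{\p\Sigma}}^2+(\p_\nu u_h)^2 ,
\]
where the tangential term $T:=\abs{\nabla^{g_{\p\Sigma}}h}_{g_{\p\Sigma}}^2$ is computable from $g_{\p\Sigma}$ and $h$. Hence $\Phi=\Lambda_g(h)\big(1+T+\Lambda_g(h)^2\big)^{-1/2}$ on $\p\Sigma$. For each fixed value $T\ge 0$ the map $x\mapsto x(1+T+x^2)^{-1/2}$ has derivative $(1+T)(1+T+x^2)^{-3/2}>0$, so it is strictly increasing on $\R$ and a bijection onto $(-1,1)$; as $\Phi$ automatically takes values in $(-1,1)$, we may invert pointwise and recover $\Lambda_g(h)$ on $\p\Sigma$ from the known quantities $\Phi$ and $T$. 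This proves the proposition. The point requiring the most care is the first step — justifying $C^1$ dependence of the solution on the boundary data and the interchange of differentiation with the area integral — whereas the vanishing of the bulk term is immediate from the Euler–Lagrange structure of \eqref{eq:minimal_surf_intro}.
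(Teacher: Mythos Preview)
Your proof is correct and follows essentially the same route as the paper: differentiate the area functional along boundary perturbations to recover $\Phi=\Lambda_g(h)\,(1+|\nabla u_h|_g^2)^{-1/2}$ on $\partial\Sigma$, then use the tangential/normal splitting of $\nabla u_h$ (with the tangential part computable from $g_{\partial\Sigma}$ and $h$) to solve for $\Lambda_g(h)$. Your presentation is in fact a bit cleaner---the paper includes a short digression about the linearized DN map that is not needed for the argument, and leaves the final pointwise inversion implicit, whereas you make the monotonicity of $x\mapsto x(1+T+x^2)^{-1/2}$ explicit.
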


In this paper, we consider if it is possible to determine the $2$-dimensional Riemannian manifold $(\Sigma,g)$ given the knowledge of the map $\Lambda_g$? We show that it is indeed possible and we prove 
% under the following minor technical assumption on the transversal manifold $(M_0,g_0)$: 
% \begin{itemize}
% 	\item[(H1)]{\it Given any $p\in M_0$, there exists a non-tangential geodesic $\gamma$, which has no self-intersections and  passes through $p$.}
% \end{itemize}
%Precisely, we prove 
the following uniqueness result.

\begin{theorem}\label{t1}
Let $(\Sigma_1,g_1)$ and $(\Sigma_2,g_2)$ be $2$-dimensional smooth Riemannian manifolds with a mutual boundary $\p\Sigma$.
% of the form \eqref{M_1}--\eqref{M_2} and suppose that (H1) is satisfied. Let $V\in C^{\infty}(M)$ and assume that zero is not a Dirichlet eigenvalue for $-\Delta_g+V$ on $M$. 
Assume that for some $\delta>0$ sufficiently small and for all $f\in U_\delta$ 
\[
\Lambda_{g_1}f = \Lambda_{g_2}f,
\]
%for some $\delta>0$ sufficiently small. 
Then there is an isometry $F:M_1\to M_2$, which satisfies
\[
  F^*g_2=g_1,
\]
and $F|_{\p \Sigma}=\textrm{Id}$.
\end{theorem}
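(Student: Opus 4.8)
The plan is to combine the method of higher order linearization with the known conformal rigidity of the Calder\'on problem for the Laplace--Beltrami operator in two dimensions. The first step is to linearize \eqref{eq:minimal_surf_intro} at $u\equiv 0$: the equation linearizes to $\Delta_g v=0$, so $\partial_\epsilon\Lambda_{g_i}(\epsilon f)\big|_{\epsilon=0}$ is the Dirichlet-to-Neumann map of the Laplace--Beltrami operator, and the hypothesis gives that $(\Sigma_1,g_1)$ and $(\Sigma_2,g_2)$ have the same DN map for $\Delta_g$. By the two-dimensional Calder\'on problem for the Laplacian there is then a diffeomorphism $F\colon\Sigma_1\to\Sigma_2$ with $F|_{\partial\Sigma}=\mathrm{Id}$ and $F^{*}g_2=c\,g_1$ for some $c\in C^\infty(\Sigma_1)$ with $c>0$ and $c|_{\partial\Sigma}=1$. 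The minimal surface equation is natural under diffeomorphisms, so $\Lambda_{F^{*}g_2}=\Lambda_{g_2}=\Lambda_{g_1}$; replacing $g_2$ by $F^{*}g_2$ we may assume $\Sigma_1=\Sigma_2=:\Sigma$ and $g_2=c\,g_1$, and it remains to prove $c\equiv 1$, for then $F$ is the desired isometry.

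The second step is the higher order linearization. In dimension two $\Delta_{c g_1}=c^{-1}\Delta_{g_1}$, so $g_1$ and $g_2$ have the same harmonic functions, and since $c|_{\partial\Sigma}=1$ they induce the same metric and the same outward normal along $\partial\Sigma$. By the well-posedness of \eqref{eq:minimal_surf_intro} and the implicit function theorem the solution is smooth in the boundary data, so for $f=\sum_{j=1}^4\epsilon_j f_j$ one has $u=\sum_j\epsilon_j v_j+O(|\epsilon|^2)$ with $v_j$ the common ($g_1$- and $g_2$-)harmonic function having boundary value $f_j$. The nonlinearity of \eqref{eq:minimal_surf_intro} has no quadratic term (its Taylor expansion is $\mathrm{div}_g\nabla u-\tfrac12\,\mathrm{div}_g(|\nabla u|_g^2\nabla u)+\cdots$), so the second order linearization of the DN map vanishes identically; applying $\partial_{\epsilon_1}\partial_{\epsilon_2}\partial_{\epsilon_3}\big|_{\epsilon=0}$ to $\Lambda_{g_1}f=\Lambda_{g_2}f$, testing against a fourth harmonic function $v_4$ and integrating by parts (all boundary terms cancel because the $f_j$ and the normal derivatives $\partial_\nu v_j|_{\partial\Sigma}$ are the same for the two metrics), the cubic term yields
\[
\int_{\Sigma}S_{g_1}(v_1,v_2,v_3,v_4)\,dV_{g_1}=\int_{\Sigma}S_{g_2}(v_1,v_2,v_3,v_4)\,dV_{g_2},
\]
where $S_g(v_1,v_2,v_3,v_4)=\langle\nabla v_1,\nabla v_2\rangle_g\langle\nabla v_3,\nabla v_4\rangle_g+\langle\nabla v_1,\nabla v_3\rangle_g\langle\nabla v_2,\nabla v_4\rangle_g+\langle\nabla v_1,\nabla v_4\rangle_g\langle\nabla v_2,\nabla v_3\rangle_g$. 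In two dimensions each inner product rescales by $c^{-1}$ and $dV_{g_2}=c\,dV_{g_1}$, so with $b:=1-c^{-1}$ (which vanishes on $\partial\Sigma$) this becomes
\[
\int_{\Sigma}b\,S_{g_1}(v_1,v_2,v_3,v_4)\,dV_{g_1}=0\qquad\text{for all }g_1\text{-harmonic }v_1,v_2,v_3,v_4.
\]

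The third step is to deduce $b\equiv 0$. In an isothermal coordinate $z$ of $(\Sigma,g_1)$ one has $g_1=e^{2\mu}|dz|^2$, and writing $q_j:=\partial_z v_j$ a direct computation gives
\[
S_{g_1}(v_1,v_2,v_3,v_4)\,dV_{g_1}=8\,e^{-2\mu}\Big(\sum_{\{a,b\}\sqcup\{c,d\}=\{1,2,3,4\}}q_aq_b\,\overline{q_c}\,\overline{q_d}\Big)\,dx\,dy.
\]
Since $q_j=\partial_z v_j$, with $v_j$ harmonic, ranges over a large family of holomorphic functions, polarizing the identity above in $v_1,\dots,v_4$ gives $\int_\Sigma b\,e^{-2\mu}\,q_1q_2\,\overline{q_3}\,\overline{q_4}\,dx\,dy=0$ for all holomorphic $q_j$, and specializing to $q_2=q_4\equiv1$, $\int_\Sigma b\,e^{-2\mu}\,h\,\overline{k}\,dx\,dy=0$ for all holomorphic $h,k$; as finite sums $\sum_m h_m\overline{k_m}$ form a subalgebra of $C(\overline\Sigma)$ containing the constants and separating points, it is dense by Stone--Weierstrass, hence $b\,e^{-2\mu}\equiv 0$ and $c\equiv 1$. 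This yields $g_2=g_1$, and $F$ is the isometry asserted by the theorem.

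The main obstacle lies in the third step. The identity produced by the linearization is an integral over all of $\Sigma$, whereas the conclusion is pointwise, so the polarization/density argument cannot be run naively in a local coordinate: one has to know that $(\Sigma,g_1)$ admits enough \emph{global} harmonic functions and that the global integral can be localized. I would resolve this using complex geometric optics solutions of $\Delta_{g_1}$ on the Riemann surface $(\Sigma,g_1)$ --- harmonic functions of the form $\mathrm{Re}\,\big(e^{\tau\Phi}(a_j+r_{\tau,j})\big)$ and $\mathrm{Re}\,\big(e^{-\tau\Phi}(a_j+r_{\tau,j})\big)$, with $\Phi$ a holomorphic Morse function whose differential vanishes (simply) at a prescribed interior point $p_0$, $a_j$ holomorphic, and remainders $r_{\tau,j}\to0$ --- followed by a stationary-phase analysis in $\tau$: the products $S_{g_1}(v_1,v_2,v_3,v_4)$ then split into non-oscillatory pieces, whose integrals against $b$ must vanish as the $\tau^4,\tau^3,\tau^2$ coefficients of the (identically zero) expansion, and an oscillatory piece with phase $4\tau\,\mathrm{Im}\,\Phi$ stationary at $p_0$, whose leading contribution --- through the fourth-order vanishing of $|\Phi'|^4$ at $p_0$ --- is proportional to $b(p_0)$, forcing $b(p_0)=0$. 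Constructing the CGO solutions with suitable phases and amplitudes and controlling the remainders finely enough that this last contribution survives is the technical heart; in the genus-zero case one may avoid the CGO construction by using a global isothermal coordinate together with the classical Runge theorem. By comparison, the combinatorics of the third order linearization and the cancellation of the boundary terms in the second step are routine.
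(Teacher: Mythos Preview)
Your overall strategy coincides with the paper's: linearize once to obtain a conformal diffeomorphism via the two--dimensional Calder\'on result, reduce to $g_2=c\,g_1$ on a single surface, derive from the third linearization the integral identity
\[
\int_{\Sigma}(1-c^{-1})\,S_{g_1}(v_1,v_2,v_3,v_4)\,dV_{g_1}=0,
\]
and then force $c\equiv 1$ by complex geometric optics and stationary phase at an interior critical point of a holomorphic Morse phase. The first two steps are carried out in the paper exactly as you describe, and the paper's Proposition~3.1 is precisely the stationary--phase argument you sketch at the end (the paper uses complex harmonic functions $v^{(1)}=v^{(2)}=e^{i\tau\Phi}a$, $v^{(3)}=v^{(4)}=\overline{e^{i\tau\Phi}a}$ rather than real parts, which makes $g(\nabla v^{(1)},\nabla v^{(2)})=0$ and leaves a single oscillatory term in $S_{g_1}$; your real--part version would generate exponentially growing non--oscillatory cross terms that would have to be balanced separately).

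There is, however, a genuine gap. The stationary--phase argument on a manifold with boundary requires repeated integration by parts against $e^{2i\tau(\Phi+\overline\Phi)}$ to suppress contributions from charts not containing the chosen critical point; these integrations by parts produce boundary integrals that do \emph{not} vanish merely because $b|_{\partial\Sigma}=0$. One needs $b=1-c^{-1}$ to vanish to \emph{infinite order} on $\partial\Sigma$. From the conformal rigidity step you only have $b|_{\partial\Sigma}=0$, and you never address the normal derivatives. In the paper this is a separate, nontrivial boundary determination (its Theorem~3.2): one constructs Kang--Nakamura type oscillating approximate solutions that concentrate in a shrinking boundary layer, inserts them into the same integral identity, and shows inductively that $\partial_\nu^{\,k} b|_{\partial\Sigma}=0$ for every $k$. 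Only after this step does the interior stationary--phase argument become valid. Your Stone--Weierstrass paragraph does not provide an alternative route: the specialization ``$q_2=q_4\equiv 1$'' asks for a global holomorphic function with $\partial v\equiv 1$, which a general compact Riemann surface with boundary need not admit, and the polarization you invoke already requires complex (holomorphic/antiholomorphic) rather than real harmonic $v_j$.
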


\subsection{Previous literature}
Inverse problems for nonlinear elliptic equations have been widely studied. A standard method is to show that the  linearization of the nonlinear DN map is actually the DN map of a linear equation, and to use the theory of inverse problems for linear equations. The first result goes back to the work Isakov and Sylvester in \cite{isakov1994global} where the authors considered the equation
\[
-\Delta u +F(x,u)=0, 
\]
on a Euclidean domain of dimension greater than or equal to three. They studied the problem of recovering a class of non-linear functions $F(x, u)$ that satisfy a homogeneity property as well as certain monotonicity and growth conditions on its partial derivatives. The analogous problem in dimension two was first solved by Isakov and Nachman in \cite{victorN}. For further earlier results regarding elliptic equations, we refer the reader to the works 
%\cite{isakov1993uniqueness,MR3168271} in the context of parabolic equations, to
\cite{sun2004inverse, sun2010inverse} in the context semilinear elliptic equations and to \cite{sun1996,sun1997inverse} in the context of quasilinear elliptic equations. We also mention the work \cite{lassas2018poisson} that studies inverse problem for general quasilinear equations on Riemannian manifolds. We refer to the work for some results. 
%and to \cite{lai2022inverse,lin2020monotonicity} for fractional semilinear elliptic equations. 
%We also mention the early work \cite{isakov1993uniqueness} and the work \cite{MR3168271} on similar results on Euclidean geometries for parabolic equations.

Many of the works mentioned above rely on a solution to a related inverse problem for the linearized equations. For the minimal surface equation of this paper, studying just the first linearization of the minimal surface equation is not enough to prove Theorem \ref{t1}. This is due to the fact that the linearization is the Laplace equation, which is conformally invariant on a two dimensional Riemannian manifold. The DN map of the first linearization will define the Riemannian manifold up to a conformal transformation \cite{Lee-Uhlmann, Lassas-Uhlmann} (see also \cite{lassas2018poisson}), but the conformal factor will only be determined after studying third order linearization of the equation. % and solving the conformal factor 
%\TODO{Lauri: I would add here the story that our problem does not reduce to the linear case due to the conformal invariance, and give refs to the classical result by Nachman. Maybe cite Sylverster and Uhlmann as well, although this is not absolutely necessary.}

%This is in contrast to the study of inverse problems 

Inverse problems have also been studied for hyperbolic equations with various nonlinearities.  
 For nonlinear hyperbolic equations it has been realized that the nonlinearity can actually be used as a beneficial tool in solving inverse problems and some still unsolved inverse problems for hyperbolic linear equations have been solved for their nonlinear counterparts. For the scalar wave equation with a quadratic nonlinearity, Kurylev-Lassas-Uhlmann \cite{kurylev2018inverse} proved that local measurements determine the global topology, differentiable structure and the conformal class of the metric $g$ on a globally hyperbolic $4$-dimensional Lorentzian manifold. The corresponding inverse problem for the linear scalar wave equation is open. 

Using nonlinearity as a tool in inverse problems is by now known as \emph{the higher order linearization method}. Recently, the works \cite{FO19,LLLS2019inverse} introduced the higher order linearization method in the context of semilinear elliptic equations on $\R^n$ and Riemannian manifolds. 
%We also refer the reader to the more recent works \cite{KrUh20,MR4332042} on study of similar inverse problems for nonlinear elliptic equations stated on CTA manifolds. 
In \cite{FO19,LLLS2019inverse, FLL21}, it was proven that for elliptic semilinear equations of the form
\begin{equation}\label{lit_nonlinear}
-\Delta u + q\s u^m=0, \quad m\geq 2,
\end{equation}
boundary measurements determine the potential $q$ on $\R^n$ and on a class of (cylindrical) Riemannian manifolds. The work \cite{LLLS2019inverse} also showed that the DN map of \eqref{lit_nonlinear} on a general Riemannian surface determine the potential $q$ and the Riemannian surface, up to a gauge and a conformal transformation respectively. 

The present paper uses the higher order linearization method, the main novelty being the complete recovery of the geometry, not just its conformal class. This requires us to develop, in particular, new boundary determination results, see Theorem~\ref{prop_bd_det}.  %\HOX{Lauri: It might be a good idea to change this to a theorem. Tony: Done.}
%with non-linear functions $F(x,z)$ that depend analytically on $z$, the problem of recovering the differentials $\p^k_z F(x,0)$ with $k\geq 3$ is equivalent to the question of injectivity of products of four solutions to the linearized equation
%$$ (-\Delta_g+V) u=0\quad \text{on $M$}.$$ 

The literature on inverse problems using the higher order linearization method is expanding fast. The works \cite{MR4052205, LLLS2021b} first realized how to use the higher order linearization method for partial data inverse problems for semilinear elliptic equations. By using the method, the works \cite{CFKKU, kian2020partial, CaNaVa, Carstea2020, CaKa, CaFe1, CaFe2} study inverse problems for quasilinear elliptic equations, and the related works \cite{lai2020partial, liimatainen2022inverse, harrach2022simultaneous, salo2022inverse} study inverse problems for nonlinear elliptic equations. The work \cite{FLL21} contains references to inverse problems for nonlinear hyperbolic equations, where nonlinearity is used a beneficial tool.

Shifting from inverse problems for general nonlinear equations
to inverse problems related to minimal surfaces, we mention the recent work \cite{Tracey}, which studied an inverse problem closely related to the one we study in this paper. They proved that if a $3$-dimensional Riemannian manifold is topologically ball and satisfies certain geometrical assumptions, the areas of a sufficiently large class of $2$-dimensional minimal surfaces determine the $3$-dimensional Riemannian manifold. Especially they determined embedded minimal surfaces from their areas. In their case, the minimal surfaces are topologically $2$-dimensional disks and satisfy additional assumptions. Proposition \ref{minimal surfaces and the DN map} shows that the areas of minimal surfaces determine the DN map $\Lambda_g$ of the minimal surface equation \eqref{eq:minimal_surf_intro}. Consequently, the relation between our work and \cite{Tracey} is that we determine general minimal surfaces embedded in $\R\times \Sigma$ equipped with metric $e\oplus g$, instead of satisfying the assumptions in \cite{Tracey}, from the knowledge of the areas of the minimal surfaces. % our work can be considered as 

%Finally, we mention the works \cite{MR4052205, LLLS2021b} that first realized how to use higher order linearization method for partial data inverse problems for nonlinear equations. ~\f{Do we want to make a larger list of recent works using nonlinearity as a tool?}

Finally, we mention that before submitting this paper we became aware of an upcoming preprint \cite{nurminen2022} by Janne Nurminen, who simultaneously and independently proves a result related to Theorem \ref{t1}. He proves that the knowledge of DN map of the minimal surface equation for the conformally Euclidean metric $g(x)=c(x)\times \text{Identity matrix}$ on $\R^n$, determines the Taylor series of the conformal factor $c(x)$ at $x_n = 0$. We agreed with him to publish the preprints of the results at the same time on the same preprint server.

\section*{Acknowledgments}
C.C. was supported by NSF of China under grants 11931011 and 11971333. T.L. was supported by the Academy of Finland (Centre of Excellence in Inverse Modeling and Imaging, grant numbers 284715 and
309963). 

% The work of Y.-H. Lin is partially supported by the Ministry of Science
% and Technology Taiwan, under the Columbus Program: MOST-110-2636-M-009-
% 007.

%\subsection{Organization of the paper}

\section{Preliminaries}\label{Section 2}

\subsection{Variation of area and the Dirichlet-to-Neumann map}

Next we  give the proof of Proposition \ref{minimal surfaces and the DN map}.

\begin{proof}

Let us next consider variation with a function
$v\in C^\infty(\Sigma)$ that is  a smooth function having the
boundary value $v|_\Sigma=h$. Then applying  integration by parts and equation
\eqref{minimal surface in local coord.}  and for formula  \eqref{area variation} (see Appendix) yields
\begin{eqnarray*}
 \frac{d}{dt}\Big|_{t=0}\text{Area}(u+tv)
 &=&2\int_\Sigma\hbox{div}_g \bigg(\frac 1 {\sqrt{1+\abs{\nabla u}^2_{g(x)}}}
 \nabla u\bigg) v dV_g
 \\
 & &
  \quad +2\int_{\p\Sigma} \sum_{j,k=1}^2\Big(\frac 1 {\sqrt{1+\abs{\nabla u}^2_{g(x)}}}
 g^{jk}\nu_k \p_j u\Big)\, v|_\Sigma dS_g\\
  \\
 &=&
  2\int_{\p\Sigma} \frac 1 {\sqrt{1+\abs{\nabla u}^2_{g(x)}}}
(\nu,\nabla u)_g\,h\, dS_g,
\end{eqnarray*}
where $\nu$ is the unit normal of $\p\Sigma$.
Let us define a non-linear boundary map $N_g$ by
%\HOX{Lauri: Replace $u|_\Sigma$ with $f$?}
\begin{eqnarray*}
N_g (u|_\Sigma)= \frac 1{\sqrt{1+\abs{\nabla u}^2_{g(x)}}}
 (\nu, \nabla u)_g\bigg|_\Sigma,
\end{eqnarray*}
where $u$ is the solution of the minimal surface equation \eqref{eq:minimal_surf_intro}. Let $u_h$ be the solution of the minimal surface equation \eqref{eq:minimal_surf_intro} with
boundary value  $u_h|_{\p\Sigma}=h.$ 

We see that the boundary $\p \Sigma$, its metric
$g_{\p\Sigma}$, and the  areas $ \text{Area}(Y(h))$ of the minimal surfaces $Y(h)$
and their variations $ \text{Area}(Y(h+tw))$ determine
\begin{eqnarray*}
 \frac{d}{dt}\Big|_{t=0}\text{Area}(Y(h+tw))
 &=&
  2\int_{\p\Sigma}  \frac 1{\sqrt{1+\abs{\nabla u_h}^2_{g(x)}}}
 (\nu, \nabla u_h)_g\,w\, dS_g.
\end{eqnarray*}
As $w\in C^\infty(\p \Sigma)$ is arbitrary, we see that $ \frac{d}{dt}\Big|_{t=0}\text{Area}(Y(h+tw))
$ with varying values of $w$ determine $N_g (h)$. Let us next consider the Frech\'et derivative of the map $h\mapsto N_g (h)$ at $h=0$ to the direction $f\in C^\infty(\p \Sigma)$, that is,
\begin{eqnarray*}
D|_{h=0}N_g (f)=
 \frac{d}{dt}\Big|_{t=0}\bigg(
 \frac 1{\sqrt{1+\abs{\nabla u_{0+tf}}^2_{g(x)}}}
 (\nu, \nabla u_{0+tf})_g\bigg)\bigg|_\Sigma=(\nu, \nabla V_f)_g\bigg|_\Sigma
\end{eqnarray*}
where $V_f$ satisfies
\begin{eqnarray*}
& &\Delta_g V_h=0,\quad\hbox{in }\Sigma,\\
& &V_h|_{\p \Sigma} =h.
\end{eqnarray*}
Thus, the Frech\'et derivative of $h\mapsto N_g (h)$ at $h=0$ determines the Dirichlet-to-Neumann
map $S_gf=(\nu, \nabla V_f)_g|_\Sigma$ for the Laplace-Beltrami equation on $(\Sigma,g)$  c.f.\ \cite{Lee-Uhlmann,Lassas-Uhlmann}.
We recall that we assume that 
 the boundary $\p \Sigma$ and its metric
$g_{\p\Sigma}$ are known
%
%
%.
%By \cite{Lee-Uhlmann,Lassas-Uhlmann}, the pair $(\p \Sigma,S_g)$ determines the metric
%of the boundary,~\f{Is here any conformal factor since in 2D? -Tony} that is, $g_{\p \Sigma}=g|_{\p \Sigma}$. 
Thus, we can determine a unit tangent 
vector on the boundary curve $\p \Sigma$ and for any given $h\in C^\infty(\p \Sigma)$,
we can compute $\tau h$ and
%Assume next that we are given $\p \Sigma$, $g|_{\p \Sigma}$,
%the function $h$ and  
%$N_g (h).$ Let $u$  be the solution of the minimal surface equation. Then we can determine t$(g^{jk}\tau_k \p_j u)\tau$, where $\tau$  is the unit
%tangent vector of $\p \Sigma$ and compute
\begin{eqnarray*}
\abs{\nabla u_h}^2_{g(x)}&=& (g^{jk}\tau_k \p_j u_h)^2+ (g^{jk}\nu_k \p_j u_h)^2\\
 &=&|\tau h|_{g_{\p \Sigma}}^2+ (N_g(h))^2,
\end{eqnarray*}
% 
% 
% 
% 
% I checked what the two refs above say about the boundary determination in 2d.
% On bottom of p. 1106 of Lee and Uhlmann it is said that 
% ``The only information about the metric $g$ that can be obtained from $A$ in two dimensions is its restriction to $\p M$, while the symbol of $\delta^{1/2}A$ does not even
% give that much information''. Here $\delta^{1/2}A = \Lambda_g$ modulo sign and a smoothing operator. 
% On the other hand, on top of p. 774 of Lassas and Uhlmann:
% ``By results of [7 = Lee and Uhlmann] (pp. 1105--1106), the operator $\Lambda_g$ determines in the two-dimensional case
% the tangential component $g_{00}(\xi,0)$ of the metric tensor $g$ on the boundary''
% Here the metric tensor is in boundary normal coordinates, so this seems to say that $g$ is fully determined on the boundary, at least if boundary normal coordinates are used.
% 
% In both the refs, the proof is opaque, and I didn't check the details. In any case the boundary determination follows from the main theorem of Lassas and Uhlmann (at least this is my interpretation of the main theorem, i.e. conformal class there means that the conformal factor needs to be 1 on the set where data is given.). We are using this fact also in the proof of Th. 1.2 below. 
This and $N_g(h)$  determine the Dirichlet-to-Neumann map $\Lambda_g (h)= (\nu, \nabla u_h|_\Sigma)_g$. 
This proves  Proposition \ref{minimal surfaces and the DN map}.
\end{proof}

\subsection{Higher order linearization}

In this section we discuss the higher order linearization method for the minimal surface equation on $(\Sigma,g)$. While we assume in this paper that $\Sigma$ is $2$-dimensional, the computations in this section hold in higher dimensions as well.  We will derive the corresponding integral identities for the first, second and third order linearizations. 
%\HOX{Lauri: corresponding to what? Origins were mentioned in intro, I wouldn't repeat them here. Tony: Removed}
Later we will see that the first order linearization can be used to determine a $2$-dimensional manifold up to a conformal transformation. The third order linearization determines the related conformal factor. %The higher order linearization method for elliptic equations originates from \cite{LLLS2019inverse, FO19}.

For $j=1,\ldots,4$ let $\eps_j\in \R$ and $f_j\in C^{2,\alpha}(\p M)$ for some $0<\alpha<1$. Let us denote $\eps=(\eps_1,\eps_2,\eps_3,\eps_4)$. We consider boundary values $f=f_\eps$  of the form 
\begin{align}\label{f_epsilon}
	f_\eps:=\displaystyle \sum_{j=1}^4\eps_j f_j%+\sum_{i,j =1}^4 \eps_i \eps_j f_{ij}+\sum_{i,j,k =1}^4\eps_i\eps_j\eps_k f_{ijk} \quad \text{ on }\quad \p M.
\end{align}
for the minimal surface equation
\begin{equation}\label{eq:minimal_surf_sec2}
\begin{aligned}
\begin{cases}
\text{div}_g\left[\frac{\nabla u}{(1+\abs{\nabla u}_g^2)^{1/2}}\right]=0, 
&\text{ in } \Sigma,
\\
u= f_\eps 
&\text{ on }\p \Sigma,
\end{cases}
    \end{aligned}
\end{equation}
Observe that $f_\eps \in U_\delta$ for sufficiently small $\epsilon_j$, where %$U_\delta$ is
\[
U_\delta:=\left\{ f\in C^{2,\alpha}(\p M)| \, \norm{f}_{C^{2,\alpha}(\p M)}<\delta  \right\}
\]
as in \eqref{u_energy}. %for some sufficiently small number $\delta>0$. 
By using the implicit function theorem and Schauder estimates for linear second order elliptic equations, one can show that the solution $u_f$ to the nonlinear equation \eqref{eq:minimal_surf_sec2} depends smoothly (in the Frech\'et sense) on the parameters $\epsilon_1,\ldots, \eps_4$ in a $U_\delta$ (cf. \cite[Section 2]{LLLS2019inverse}, \cite[Appendix B]{CFKKU} for detailed arguments).%\f{Catalin, have you proven this in some of your papers for quasilinear?} 

In this paper, we use the positive sign convention for the Laplacian. In local coordinates
\[
 \Delta_g u=-\nabla\cdot\nabla u= -\abs{g}^{1/2}\p_a\big(\abs{g}^{1/2}g^{ab}\p_b u\big).
\]
We denote by $\nabla$, $\Delta$, $\ccdot$ and $\abs{\ccdot}$ the corresponding covariant derivative, Laplacian, inner product and norm given be the metric $g$ if there is no change of confusion. We record the higher order linearizations of \eqref{eq:minimal_surf_sec2} at $\eps=0$, which corresponds to zero solution. We denote the first, second and third linearizations by
\begin{equation}\label{eq:not_for_lins}
 v^{(j)}:= \left.\frac{\p}{\p\epsilon_j}\right|_{\eps=0} u_f, \quad w^{(jk)}:= \left.\frac{\p^2}{\p\epsilon_j\p\epsilon_k}\right|_{\eps=0} u_f, \quad w^{(jkl)}:= \left.\frac{\p^3}{\p\epsilon_j\p\epsilon_k\p\epsilon_l}\right|_{\eps=0} u_f
\end{equation}
respectively. 

%\HOX{Lauri: I would change this to (1) $v^{(j)}$ satisfies the equation .... Also shouldn't we just say that $w^{(jk)}=0$? Tony: Edited.}
\begin{lemma}[Higher order linearizations]\label{lem:high_ord_lin} Let $f$ be as in \eqref{f_epsilon}, and for $j,k,l\in \{1,\ldots,4\}$ let $v^{(j)}$, $w^{(jk)}$ and $w^{(jkl)}$ be as in \eqref{eq:not_for_lins}.

 \noindent\textbf{(1)} The first linearization $v^{(j)}$ %of the minimal surface equation \eqref{eq:minimal_surf_sec2} at $\eps=0$ 
 satisfies the equation 
 \begin{equation}\label{linear_eq}
	\begin{aligned}
		\begin{cases}
			\Delta_{g}v^{(j)}=0 
			& \text{ in } \Sigma,
			\\
			v^{(j)}=f_j
			&\text{ on }\p \Sigma.
		\end{cases}
	\end{aligned}
\end{equation}
 \noindent\textbf{(2)} The second linearization $w^{(jk)}$ %of \eqref{eq:minimal_surf_sec2} at $\eps=0$  
 satisfies $\Delta_{g}w^{(jk)}=0$ with $w^{(jk)}|_{\p\Sigma}=0$, and thus 
 \begin{equation}\label{2nd_lin_eq}
  w^{(jk)}\equiv 0.
 \end{equation}
% 
%  the equation
%  \begin{equation}\label{2nd_lin_eq}
% 	\begin{aligned}
% 		\begin{cases}
% 			\Delta_{g}w^{(jk)}=0 
% 			& \text{ in } \Sigma,
% 			\\
% 			w^{(jk)}=0
% 			&\text{ on }\p \Sigma,
% 		\end{cases}
% 	\end{aligned}
% \end{equation}
 \noindent\textbf{(3)} The third linearization $w^{(jkl)}$ %of \eqref{eq:minimal_surf_sec2} at $\eps=0$  
 satisfies the equation
  \begin{equation}\label{3rd_lin_eq}
	\begin{aligned}
		\begin{cases}
			\Delta_{g}w^{(jkl)}=-\nabla\cdot \left[\nabla v^{(j)}(\nabla v^{(l)}\cdot \nabla v^{(k)})\right]  \\
			\qquad\qquad\quad-\nabla\cdot\left[\nabla v^{(k)}(\nabla v^{(l)}\cdot \nabla v^{(j)})\right] -\nabla\cdot\left[\nabla v^{(l)}(\nabla v^{(j)}\cdot \nabla v^{(k)})\right]
			& \text{ in } \Sigma.
			\\
			w^{(jkl)}=0
			&\text{ on }\p \Sigma,
		\end{cases}
	\end{aligned}
\end{equation}
\end{lemma}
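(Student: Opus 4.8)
The plan is to use that \eqref{eq:minimal_surf_sec2} has the divergence form $\operatorname{div}_g F(\nabla u)=0$ with flux
\[
 F(p)=\frac{p}{(1+|p|_g^2)^{1/2}},\qquad p\in T_x\Sigma .
\]
By the well-posedness recalled before the lemma, together with the implicit function theorem and Schauder estimates (as in \cite[Section~2]{LLLS2019inverse}, \cite[Appendix~B]{CFKKU}), the map $\epsilon\mapsto u_{f_\epsilon}$ is $C^\infty$ into $C^{2,\alpha}(\Sigma)$ and $u_{f_\epsilon}|_{\epsilon=0}=0$ (since $f_0=0$ and the zero function solves \eqref{eq:minimal_surf_sec2}). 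Hence $v^{(j)},w^{(jk)},w^{(jkl)}$ are well defined, $\nabla u_{f_\epsilon}=O(\epsilon)$ near $\epsilon=0$, and since $\operatorname{div}_g$ does not depend on $\epsilon$ we may differentiate the identity $\operatorname{div}_g F(\nabla u_{f_\epsilon})=0$ in $\epsilon$ and commute the $\epsilon$-derivatives through $\operatorname{div}_g$.

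First I would Taylor expand the flux in $p$: from $(1+s)^{-1/2}=1-\tfrac12 s+\tfrac38 s^2-\cdots$,
\[
 F(p)=p-\tfrac12\,|p|_g^2\,p+R(p),
\]
where $R(p)$ is a series (convergent because $|\nabla u|_g$ is small) of terms each homogeneous of degree $\ge 5$ in $p$. The structural point is that $F$ has \emph{no} quadratic term. Differentiating $\operatorname{div}_g F(\nabla u_{f_\epsilon})=0$ once, only the linear part of $F$ contributes at $\epsilon=0$ because $\nabla u_{f_\epsilon}=O(\epsilon)$, so $\operatorname{div}_g\nabla v^{(j)}=-\Delta_g v^{(j)}=0$; and $v^{(j)}|_{\p\Sigma}=f_j$ from $u_{f_\epsilon}|_{\p\Sigma}=f_\epsilon=\sum_j\epsilon_j f_j$, which is \eqref{linear_eq}. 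Differentiating twice, the contribution of $-\tfrac12|p|_g^2 p$ and of $R$ is $O(\epsilon^3)$ and drops out, leaving $\operatorname{div}_g\nabla w^{(jk)}=-\Delta_g w^{(jk)}=0$ with $w^{(jk)}|_{\p\Sigma}=0$ (the boundary datum is linear in $\epsilon$); uniqueness for the Dirichlet problem of $\Delta_g$ then gives $w^{(jk)}\equiv 0$, i.e. \eqref{2nd_lin_eq}.

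For \eqref{3rd_lin_eq} I would differentiate three times, in $\epsilon_j,\epsilon_k,\epsilon_l$, at $\epsilon=0$. The remainder $R(\nabla u_{f_\epsilon})=O(\epsilon^5)$ drops out; the linear part yields $\operatorname{div}_g\nabla w^{(jkl)}=-\Delta_g w^{(jkl)}$; and in $-\tfrac12(\nabla u_{f_\epsilon}\cdot\nabla u_{f_\epsilon})\nabla u_{f_\epsilon}$, which is $O(\epsilon^3)$, one may replace $\nabla u_{f_\epsilon}$ by its leading term $\sum_m\epsilon_m\nabla v^{(m)}$ (the rest contributes $O(\epsilon^4)$). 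Expanding $-\tfrac12\big(\sum_a\epsilon_a\nabla v^{(a)}\big)\cdot\big(\sum_b\epsilon_b\nabla v^{(b)}\big)\,\big(\sum_c\epsilon_c\nabla v^{(c)}\big)$ and applying $\partial_{\epsilon_j}\partial_{\epsilon_k}\partial_{\epsilon_l}|_{\epsilon=0}$ gives
\[
 -\nabla v^{(j)}(\nabla v^{(k)}\cdot\nabla v^{(l)})-\nabla v^{(k)}(\nabla v^{(l)}\cdot\nabla v^{(j)})-\nabla v^{(l)}(\nabla v^{(j)}\cdot\nabla v^{(k)}),
\]
the prefactor $\tfrac12$ being absorbed by the multiplicity $2$ coming from the symmetry of the inner product. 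Adding these contributions, taking $\operatorname{div}_g$, and using $\operatorname{div}_g=\nabla\cdot$ and $\Delta_g=-\nabla\cdot\nabla$ yields \eqref{3rd_lin_eq}, with $w^{(jkl)}|_{\p\Sigma}=0$ again by linearity of $\epsilon\mapsto f_\epsilon$.

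The argument is essentially a bookkeeping computation. The only points needing care are the smooth dependence of $u_{f_\epsilon}$ on $\epsilon$ and the legitimacy of differentiating under $\operatorname{div}_g$ (supplied by the cited implicit-function-theorem argument), and the combinatorics of the third $\epsilon$-derivative of the cubic flux term; the conceptual point is that the vanishing of the quadratic term of $F$ forces $w^{(jk)}\equiv 0$ and makes the third linearization the first one in which the nonlinearity appears.
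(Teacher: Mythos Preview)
Your proof is correct. Both your argument and the paper's differentiate the divergence-form equation in $\epsilon$ and evaluate at $\epsilon=0$, but the organization differs: the paper differentiates $\nabla\cdot\big[(1+|\nabla u|^2)^{-1/2}\nabla u\big]=0$ directly, writing out the full $\epsilon_l$-derivative of each of the five terms appearing in the second linearization before observing that all but four vanish at $\epsilon=0$, whereas you first Taylor expand the flux as $F(p)=p-\tfrac12|p|_g^2\,p+O(|p|^5)$ and then note that only the linear and cubic parts can contribute through order three. Your route is more economical---it explains \emph{a priori} why $w^{(jk)}\equiv 0$ (no quadratic term in $F$) and why nothing beyond the cubic term enters \eqref{3rd_lin_eq}---at the cost of the small combinatorial step of matching the prefactor $\tfrac12$ with the multiplicity $2$ from the symmetry of the inner product; the paper's direct computation avoids that step but carries several intermediate expressions that ultimately vanish.
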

\begin{proof}
Let us denote
\[
 u^{(j)}:= \frac{\p}{\p\epsilon_j} u_f, \quad u^{(jk)}:= \frac{\p}{\p\epsilon_j\p\epsilon_k} u_f, \quad u^{(jkl)}:= \frac{\p}{\p\epsilon_j\p\epsilon_k\p\epsilon_l} u_f.
\]
 The minimal surface equation  is
 \begin{equation}\label{eq:minimal_surface_lemma_proof}
0=\nabla\cdot \left[\frac{\nabla u}{(1+\abs{\nabla u}^2)^{1/2}}\right].
\end{equation}

 \noindent\textbf{(1)} By differentiating \eqref{eq:minimal_surface_lemma_proof}, we see that $u^{(j)}$ satisfies: % first linearization is
\begin{align*}
0=\nabla\cdot\left[\frac{\nabla u^{(j)}}{(1+\abs{\nabla u}^2)^{1/2}}\right]-\nabla\cdot\left[\frac{\nabla u}{(1+\abs{\nabla u}^2)^{3/2}}(\nabla u \cdot \nabla u^{(j)})\right].
\end{align*}
Since $u^{(j)}|_{\eps=0}\equiv 0$, we  have \eqref{linear_eq}.

 \noindent\textbf{(2)} By differentiating \eqref{eq:minimal_surface_lemma_proof} twice shows that $u^{(jk)}$ satisfies
%The second linearization is 
\begin{align}\label{eq:2nd_lin_calc_proof}
\begin{split}
 0=&\nabla\cdot\left[\frac{\nabla u^{(jk)}}{(1+\abs{\nabla u}^2)^{1/2}}\right]-\nabla\cdot\left[\frac{\nabla u^{(j)}}{(1+\abs{\nabla u}^2)^{3/2}}(\nabla u \cdot \nabla u^{(k)})\right] -\nabla\cdot\left[\frac{\nabla u^{(k)}}{(1+\abs{\nabla u}^2)^{3/2}}(\nabla u \cdot \nabla u^{(j)})\right] \\
 &\qquad +3\nabla\cdot\left[\frac{\nabla u}{(1+\abs{\nabla u}^2)^{5/2}}(\nabla u \cdot \nabla u^{(j)})(\nabla u \cdot \nabla u^{(k)})\right] \\
 &\qquad\qquad-\nabla\cdot\left[\frac{\nabla u}{(1+\abs{\nabla u}^2)^{3/2}}(\nabla u^{(j)}\cdot \nabla u^{(k)}+ \nabla u \cdot \nabla u^{(jk)})\right].
 \end{split}
\end{align}
Substituting $u^{(j)}|_{\eps=0}=u^{(k)}|_{\eps=0}\equiv 0$ yields \eqref{2nd_lin_eq}. 

 \noindent\textbf{(3)} Since the expressions for the third linearization of \eqref{eq:minimal_surface_lemma_proof} will become lengthy, we will consider the terms on the right hand side of \eqref{eq:2nd_lin_calc_proof} one at a time. Differentiating the first term
 \[
  \nabla\cdot\left[\frac{\nabla u^{(jk)}}{(1+\abs{\nabla u}^2)^{1/2}}\right]
 \]
 in $\eps_l$ yields
\begin{align*}
 \nabla\cdot\left[\frac{\nabla u^{(jkl)}}{(1+\abs{\nabla u}^2)^{1/2}}\right]-\nabla\cdot\left[\frac{\nabla u^{(jk)}}{(1+\abs{\nabla u}^2)^{3/2}}(\nabla u \cdot \nabla u^{(l)})\right].
\end{align*}
Differentiating the second term 
\[
 -\nabla\cdot\left[\frac{\nabla u^{(j)}}{(1+\abs{\nabla u}^2)^{3/2}}(\nabla u \cdot \nabla u^{(k)})\right]
\]
in $\eps_l$ yields
\begin{align*}
 &-\nabla\cdot\left[\frac{\nabla u^{(jl)}}{(1+\abs{\nabla u}^2)^{3/2}}(\nabla u \cdot \nabla u^{(k)})\right]+3\nabla\cdot\left[\frac{\nabla u^{(j)}}{(1+\abs{\nabla u}^2)^{5/2}}(\nabla u \cdot \nabla u^{(k)})(\nabla u \cdot \nabla u^{(l)})\right] \\
 &-\nabla\cdot\left[\frac{\nabla u^{(j)}}{(1+\abs{\nabla u}^2)^{3/2}}(\nabla u^{(l)}\cdot \nabla u^{(k)}+ \nabla u \cdot \nabla u^{(lk)})\right].
\end{align*}
Differentiating the third term 
\[
 -\nabla\cdot\left[\frac{\nabla u^{(k)}}{(1+\abs{\nabla u}^2)^{3/2}}(\nabla u \cdot \nabla u^{(j)})\right] 
\]
in $\eps_l$ yields
\begin{align*}
 &-\nabla\cdot\left[\frac{\nabla u^{(kl)}}{(1+\abs{\nabla u}^2)^{3/2}}(\nabla u \cdot \nabla u^{(j)})\right] +3\nabla\cdot\left[\frac{\nabla u^{(k)}}{(1+\abs{\nabla u}^2)^{5/2}}(\nabla u \cdot \nabla u^{(j)})(\nabla u \cdot \nabla u^{(l)})\right] \\
 &-\nabla\cdot\left[\frac{\nabla u^{(k)}}{(1+\abs{\nabla u}^2)^{3/2}}(\nabla u^{(l)}\cdot \nabla u^{(j)}+ \nabla u \cdot \nabla u^{(jl)})\right].
\end{align*}
Differentiating the fourth term 
\[
    3\nabla\cdot\left[\frac{\nabla u}{(1+\abs{\nabla u}^2)^{5/2}}(\nabla u \cdot \nabla u^{(j)})(\nabla u \cdot \nabla u^{(k)})\right]                        
\]
 in $\eps_l$ yields
\begin{align*}
 &3\nabla\cdot\left[\frac{\nabla u^{(l)}}{(1+\abs{\nabla u}^2)^{5/2}}(\nabla u \cdot \nabla u^{(j)})(\nabla u \cdot \nabla u^{(k)})\right] \\
 &\qquad -15\nabla\cdot\left[\frac{\nabla u}{(1+\abs{\nabla u}^2)^{7/2}}(\nabla u \cdot \nabla u^{(j)})(\nabla u \cdot \nabla u^{(k)})(\nabla u \cdot \nabla u^{(l)})\right] \\
 &\qquad\qquad+3\nabla\cdot\Bigg\{\frac{\nabla u}{(1+\abs{\nabla u}^2)^{5/2}}\Big[(\nabla u^{(l)}\cdot \nabla  u^{(j)}+ \nabla u \cdot \nabla u^{(jl)})(\nabla u \cdot \nabla u^{(k)}) \\
 &\qquad\qquad\qquad \qquad\qquad\qquad\qquad+(\nabla u \cdot \nabla u^{(j)})(\nabla u^{(l)}\cdot \nabla  u^{(k)}+ \nabla u \cdot \nabla u^{(kl)})\Big]\Bigg\}.
\end{align*}
Finally, the derivative of the last term
 \[
  -\nabla\cdot\left[\frac{\nabla u}{(1+\abs{\nabla u}^2)^{3/2}}(\nabla u^{(j)}\cdot \nabla u^{(k)}+ \nabla u \cdot \nabla u^{(jk)})\right]
 \]
in $\eps_l$ is 
\begin{align*}
 &-\nabla\cdot\left[\frac{\nabla u^{(l)}}{(1+\abs{\nabla u}^2)^{3/2}}(\nabla u^{(j)}\cdot \nabla u^{(k)}+ \nabla u \cdot \nabla u^{(jk)})\right]\\
 &\qquad+3\nabla\cdot\left[\frac{\nabla u}{(1+\abs{\nabla u}^2)^{5/2}}(\nabla u \cdot \nabla u^{(l)})(\nabla u^{(j)}\cdot \nabla u^{(k)}+ \nabla u \cdot \nabla u^{(jk)})\right] \\
 &\qquad\qquad-\nabla\cdot\left[\frac{\nabla u}{(1+\abs{\nabla u}^2)^{3/2}}\big(\nabla u^{(jl)}\cdot \nabla u^{(k)}+ \nabla u^{(j)}\cdot \nabla u^{(kl)} +\nabla u^{(l)} \cdot \nabla u^{(jk)}+\nabla u\cdot \nabla u^{(jkl)}\big)\right].
\end{align*}
Once evaluated at $\eps=0$, most of terms in the above five derivatives in $\eps_l$ vanish. The non-vanishing terms are 
\[
 -\nabla\cdot \left[\nabla v^{(j)}(\nabla v^{(l)}\cdot \nabla v^{(k)})\right], \quad -\nabla\cdot\left[\nabla u^{(k)}(\nabla u^{(l)}\cdot \nabla u^{(j)})\right], \quad -\nabla\cdot\left[\nabla u^{(l)}(\nabla u^{(j)}\cdot \nabla u^{(k)})\right]
\]
and $\nabla\cdot \nabla w^{(jkl)}=-\Delta w^{(jkl)}$. Substituting $u^{(j)}|_{\eps=0}=u^{(k)}|_{\eps=0}=u^{(l)}|_{\eps=0}\equiv 0$ consequently yields \eqref{3rd_lin_eq}.
\end{proof}

It is well known that the conformal class of $(\Sigma,g)$ 
is determined by the DN map associated to the first linearization \eqref{linear_eq}. 
Fixing a representative $g_1$ of the conformal class there holds $g = c g_1$,
and the content of the present paper is to show that the conformal factor $c$
is determined by the right-hand side of the following integral identity. 
We note that the left hand side of the identity can be readily computed given the DN map of the minimal surface equation.
% 
% two potentials $q_1,q_2 \in C^{\infty}(M)$. Let $w_a^{(ij)}$, $w_a^{(ijk)}$ and $w_a^{(1234)}$ be the solutions of \eqref{w_eq}, \eqref{w_eq 3rd} and \eqref{w_eq 4th}, respectively, as $q=q_a$, for $a=1,2$ and $i,j,k=1,2,3,4$, then we have the following integral identities for the second, third and fourth order linearized equations.

\begin{lemma}[The integral identity for the third linearization]\label{Lem:Integral identity_3rd} Let $v^{(j)}$, $j=1,\ldots,4$, be solutions to the first order linearized equation \eqref{linear_eq}.  
%Let $(\Sigma_1,g_1)$ and $(\Sigma_2,g_2)$ be $2$-dimensional Riemannian manifolds with boundary. Assume that $F:M_1\to M_2$ is a conformal mapping preserving the boundary.
	The integral identity 
		\begin{align}\label{third_integral_id}
			\begin{split}
				&\int_{\p \Sigma} f_m \s \p^3 _{\eps_j \eps_k \eps_l}\big|_{\epsilon=0} \Lambda (f_\epsilon) \, dS_g - \int_{\p\Sigma} w^{(jkl)}\s \p_\nu v^{(m)} \s dS_g  \\
				&\qquad+\int_{\p \Sigma}v^{(m)}\Big[\p_\nu v^{(j)}(\nabla v^{(l)}\cdot \nabla  v^{(k)}) +\p_\nu  v^{(k)}(\nabla v^{(l)}\cdot \nabla  v^{(j)}) +\p_\nu  v^{(l)}(\nabla v^{(j)}\cdot \nabla  v^{(k)})\Big]dS_g \\
				&=\int_\Sigma\Big[ (\nabla v^{(m)}\cdot \nabla v^{(j)})(\nabla v^{(l)}\cdot \nabla  v^{(k)}) +(\nabla v^{(m)}\cdot\nabla v^{(k)})(\nabla v^{(l)}\cdot \nabla  v^{(j)}) \\
				&\qquad\qquad\qquad\qquad\qquad\qquad\qquad\qquad\qquad+(\nabla v^{(m)}\cdot\nabla v^{(l)})(\nabla v^{(j)}\cdot \nabla  v^{(k)})\Big]dV_g
			\end{split}
		\end{align}
		holds for any $j,k,l,m\in \{1,\ldots,4\}$. Here $w^{(jkl)}$ is as in \eqref{eq:not_for_lins}.
% 	for arbitrary $f_5\in C^{2,\alpha}(\p M)$, where $v^{(5)}$ is the solution of 
% 	\begin{align*}
% 			\begin{cases}
% 				(-\Delta_{g}+V)v^{(5)}=0 
% 				& \text{ in } M,
% 				\\
% 				v^{(5)}=f_5
% 				&\text{ on }\p M,
% 			\end{cases}
% 		\end{align*}
\end{lemma}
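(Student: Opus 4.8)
The plan is to derive the identity by testing the third-linearization equation \eqref{3rd_lin_eq} against the harmonic function $v^{(m)}$ and integrating by parts twice, keeping careful track of the boundary terms and relating the normal derivative $\p_\nu w^{(jkl)}|_{\p\Sigma}$ to the third $\eps$-derivative of the nonlinear DN map. First I would recall from Lemma~\ref{lem:high_ord_lin}(1) that $\Delta_g v^{(m)} = 0$ with $v^{(m)}|_{\p\Sigma} = f_m$, and from part (3) that $w^{(jkl)}$ solves \eqref{3rd_lin_eq} with zero boundary data. Multiplying the equation $\Delta_g w^{(jkl)} = -\nabla\cdot[\,\cdots]$ by $v^{(m)}$, integrating over $\Sigma$, and using Green's formula $\int_\Sigma (v^{(m)}\Delta_g w^{(jkl)} - w^{(jkl)}\Delta_g v^{(m)})\,dV_g = \int_{\p\Sigma}(w^{(jkl)}\p_\nu v^{(m)} - v^{(m)}\p_\nu w^{(jkl)})\,dS_g$ gives, since $\Delta_g v^{(m)} = 0$,
\[
 -\int_{\p\Sigma} v^{(m)}\p_\nu w^{(jkl)}\,dS_g + \int_{\p\Sigma} w^{(jkl)}\p_\nu v^{(m)}\,dS_g = \int_\Sigma v^{(m)}\Big(-\nabla\cdot[\nabla v^{(j)}(\nabla v^{(l)}\cdot\nabla v^{(k)})] - (\text{two more})\Big)\,dV_g.
\]
On the right-hand side I would integrate by parts once more, moving the divergence onto $v^{(m)}$; each term produces a bulk term $\int_\Sigma (\nabla v^{(m)}\cdot\nabla v^{(j)})(\nabla v^{(l)}\cdot\nabla v^{(k)})\,dV_g$ (and cyclic analogues) together with a boundary term $-\int_{\p\Sigma} v^{(m)}\,\p_\nu v^{(j)}\,(\nabla v^{(l)}\cdot\nabla v^{(k)})\,dS_g$ (and analogues). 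Collecting these reproduces exactly the third and fourth lines of \eqref{third_integral_id}, modulo identifying the $\int_{\p\Sigma} v^{(m)}\p_\nu w^{(jkl)}$ term.

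The remaining point is to express $\int_{\p\Sigma} v^{(m)}\p_\nu w^{(jkl)}\,dS_g = \int_{\p\Sigma} f_m\,\p_\nu w^{(jkl)}\,dS_g$ in terms of the DN data. Here I would differentiate the definition $\Lambda_g(f_\eps) = \p_\nu u_{f_\eps}|_{\p\Sigma}$ three times in $\eps_j,\eps_k,\eps_l$ at $\eps=0$. Since $\p/\p\eps_j\cdots$ commutes with $\p_\nu$ and with restriction to $\p\Sigma$, and since $\p^3_{\eps_j\eps_k\eps_l}|_{\eps=0} u_{f_\eps} = w^{(jkl)}$ by \eqref{eq:not_for_lins}, we get $\p^3_{\eps_j\eps_k\eps_l}|_{\eps=0}\Lambda(f_\eps) = \p_\nu w^{(jkl)}|_{\p\Sigma}$. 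Substituting this into the boundary integral yields the first term $\int_{\p\Sigma} f_m\,\p^3_{\eps_j\eps_k\eps_l}|_{\eps=0}\Lambda(f_\eps)\,dS_g$ of \eqref{third_integral_id}. Rearranging all terms to one side gives the claimed identity.

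I do not expect a serious obstacle here: the argument is a standard higher-order-linearization integration-by-parts computation, and all the analytic prerequisites (smooth dependence of $u_{f_\eps}$ on $\eps$ in $C^{2,\alpha}$, validity of the linearized equations, $C^{2,\alpha}$-regularity making all the boundary integrals and Green's identities legitimate) are already in place from the discussion preceding Lemma~\ref{lem:high_ord_lin}. The only mild bookkeeping care is to make sure the symmetrization over $j,k,l$ in \eqref{3rd_lin_eq} matches the symmetrization appearing on the right-hand side of \eqref{third_integral_id}; since $w^{(jkl)}$ is symmetric in its indices and the source term in \eqref{3rd_lin_eq} is the symmetrization of $-\nabla\cdot[\nabla v^{(j)}(\nabla v^{(l)}\cdot\nabla v^{(k)})]$, this is automatic, and after the second integration by parts the bulk term is visibly symmetric in $j,k,l$ as well.
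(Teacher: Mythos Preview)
Your proposal is correct and follows essentially the same route as the paper: you multiply the third linearized equation \eqref{3rd_lin_eq} by the harmonic function $v^{(m)}$, integrate by parts, and identify $\p_\nu w^{(jkl)}|_{\p\Sigma}$ with $\p^3_{\eps_j\eps_k\eps_l}|_{\eps=0}\Lambda(f_\eps)$. The only cosmetic difference is that you invoke Green's second identity directly whereas the paper applies Green's first identity twice (once to produce the $\int_{\p\Sigma} w^{(jkl)}\p_\nu v^{(m)}$ term via harmonicity of $v^{(m)}$, once to handle the divergence source); the computations are the same.
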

\begin{proof}
	The proof is by integration by parts. By using the formula \eqref{3rd_lin_eq} for the third linearization together with $\Delta v^{(m)}=0$ and $v^{(m)}|_{\p\Sigma}=f_m$ we obtain %, and we only prove \eqref{second integral id}. 	
%\noindent\textbf{(1)} For the second linearized equation \eqref{w_eq} as $q=q_a$, for $a=1,2$. Via an integration by parts, one has 
		\begin{align*}
			&\int_{\p \Sigma} f_m\s \p^3_{\eps_j \eps_k\eps_l }\big|_{\eps=0}\Lambda(f_\eps)\s dS_g = \int_{\p \Sigma} f_m\s\p_\nu w^{(jkl)}\s dS_g =\int_{\Sigma} f_m\Delta w^{(jkl)}dV_g + \int_{\Sigma} \nabla v^{(m)} \cdot \nabla  w^{(jkl)} \s dV_g  \\
			&= -\int_{\Sigma} v^{(m)}\Bigg[\nabla\cdot \left(\nabla v^{(j)}(\nabla v^{(l)}\cdot \nabla v^{(k)})\right)  +\nabla\cdot\left(\nabla v^{(k)}(\nabla v^{(l)}\cdot \nabla v^{(j)})\right) \\
			&\qquad\qquad\qquad\qquad\qquad \ \ \s\s +\nabla\cdot\left(\nabla v^{(l)}(\nabla v^{(j)}\cdot \nabla v^{(k)})\right)\Bigg]dV_g +\int_{\p\Sigma} w^{(jkl)}\s \p_\nu v^{(m)} \s dS_g  \\
			&=\int_\Sigma\Big[ (\nabla v^{(m)}\cdot \nabla v^{(j)})(\nabla v^{(l)}\cdot \nabla  v^{(k)}) +(\nabla v^{(m)}\cdot\nabla v^{(k)})(\nabla v^{(l)}\cdot \nabla  v^{(j)}) \\
				&\qquad\qquad\qquad\qquad\qquad\qquad\qquad\qquad\qquad+(\nabla v^{(m)}\cdot\nabla v^{(l)})(\nabla v^{(j)}\cdot \nabla  v^{(k)})\Big]dV_g \\
				&-\int_{\p \Sigma}v^{(m)}\Big[\p_\nu v^{(j)}(\nabla v^{(l)}\cdot \nabla  v^{(k)}) +\p_\nu  v^{(k)}(\nabla v^{(l)}\cdot \nabla  v^{(j)}) +\p_\nu  v^{(l)}(\nabla v^{(j)}\cdot \nabla  v^{(k)})\Big]dV_g \\
				&+\int_{\p\Sigma} w^{(jkl)}\s \p_\nu v^{(m)} \s dS_g.
% 			&V\left(  w_1^{(ij)} - w_2^{(ij)} \right) v^{(k)} \, dV  +\int_M (q_1-q_2)v^{(i)}v^{(j)}v^{(k)}\, dV\\
% 			&- \int_{M} \left( w_1^{(ij)}-w_2^{(ij)}\right) \Delta v^{(k)}\, dV \\
% 			=&\int_M (q_1-q_2)v^{(i)}v^{(j)}v^{(k)}\, dV,
		\end{align*}
		This is \eqref{third_integral_id}.
		%where we have utilized $w_1^{(ij)}=f_{ij}=w_2^{(ij)}$ on $\p M$ and $(-\Delta_g+V) v^{(k)}=0$ in $M$, for $k=1,2,3,4$.
% 		
% 		
% \noindent\textbf{(2)} Similarly, we have 	
% 		\begin{align*}
% 			\int_{\p M} \left. \p^3_{\eps_i \eps_j \eps_k}\right|_{\eps=0}\left( \Lambda_{q_1}f_\eps-\Lambda_{q_2}f_\eps \right)  f_l\, dS 
% 			= \int_{\p M} \left( \p_\nu w_1^{(ijk)}-\p_\nu w_2^{(ijk)} \right) f_l\, dS,
% 		\end{align*}
% 		and use the integration by parts twice combining with equations \eqref{linear_eq} and \eqref{w_eq 3rd} to obtain the claimed identity. \noindent\textbf{(3)} It is similar to previous cases, so we skip the proof.
	\end{proof}

\section{Proof of Theorem \ref{t1}}
We prove Theorem \ref{t1}. For this let us assume that $(\Sigma_1,g_1)$ and $(\Sigma_2,g_2)$ are $2$-dimensional Riemannian manifolds with mutual boundary $\p\Sigma$, and that $\Lambda_{g_1}=\Lambda_{g_2}$. Here $\Lambda_{g_\beta}$ is the DN map of minimal surface equation as defined in \eqref{eq:DN_map_for_nonlin}. Here and below the index $\beta=1,2$ refers to quantities on the Riemannian manifold $(\Sigma_\beta,g_\beta)$. We consider solutions $u_\beta$ to
\begin{equation}\label{eq:minimal_surf_sec3}
\begin{aligned}
\begin{cases}
\text{div}_{g_\beta}\left[\frac{\nabla u_\beta}{(1+\abs{\nabla u_\beta}_{g_\beta}^2)^{1/2}}\right]=0, 
&\text{ in } \Sigma_\beta,
\\
u= f_\eps 
&\text{ on }\p \Sigma,
\end{cases}
    \end{aligned}
\end{equation}

By Lemma \ref{lem:high_ord_lin}, the first linearization of \eqref{eq:minimal_surf_sec3} is  
\begin{equation}\label{eq:first_lin_proof}
	\begin{aligned}
		\begin{cases}
			\Delta_{g_\beta}v_\beta^{(j)}=0 
			& \text{ in } \Sigma_\beta,
			\\
			v_\beta^{(j)}=f_j
			&\text{ on }\p \Sigma.
		\end{cases}
	\end{aligned}
\end{equation}
 The index $j=1,\ldots,4$, refers to the boundary value $f_j\in C^\infty(\p \Sigma)$, which we assume to be the same for both $\beta=1,2$.  As remarked in Section \ref{Section 2}, the DN map of the minimal surface equation is smooth in Frech\'et sense. It thus follows from $\Lambda_{g_1}=\Lambda_{g_2}$, that the DN maps of the first linearizations \eqref{eq:first_lin_proof} also agree. By \cite[Theorem 5.1]{lassas2018poisson} or \cite{Lassas-Uhlmann}, there is a diffeomorphic conformal mapping
\[
 F:\Sigma_1\to\Sigma_2, \quad F^*g_2=cg_1,
\]
which also satisfies $F|_{\p \Sigma}=\text{Id}$ and $F_*\p_{\nu_1}=\p_{\nu_2}$. The latter two conditions mean that $F$ preserves Cauchy data of functions. The conformal factor $c$ satisfies $c|_{\p \Sigma}=1$. 
We remark that the first linearization \eqref{eq:first_lin_proof} is invariant under conformal mappings, but the minimal surface equation itself is not.

The notation will be temporarily quite heavy. To recover the conformal factor we consider the third order linearizations  
   \begin{equation*}%\label{3rd_lin_eq_proof}
	\begin{aligned}
		\begin{cases}
			\Delta_{g}w_\beta^{(jkl)}=-\nabla\cdot_{\beta} \left[\nabla v_\beta^{(j)}(\nabla v_\beta^{(l)}\cdot \nabla v_\beta^{(k)})\right]  \\
			\qquad\qquad\quad\quad-\nabla\cdot_{\beta}\left[\nabla v_\beta^{(k)}(\nabla v_\beta^{(l)}\cdot \nabla v_\beta^{(j)})\right] -\nabla\cdot_{\beta}\left[\nabla v_\beta^{(l)}(\nabla v_\beta^{(j)}\cdot \nabla v_\beta^{(k)})\right]
			& \text{ in } \Sigma_\beta,
			\\
			w_\beta^{(jkl)}=0
			&\text{ on }\p \Sigma.
		\end{cases}
	\end{aligned}
\end{equation*}
Here $\ccdot_{\beta}$ and $\nabla=\nabla_\beta$ are the respective quantities on $(\Sigma_\beta, g_\beta)$. We use $F$ to transform our analysis to $(\Sigma_1,g_1)$ as follows. We simplify our notation by setting
\[
 v^{(j)}:=v^{(j)}_1, \quad \widetilde v^{(j)}:=v_2^{(j)}\circ F
\]
and 
\[
 w^{(jkl)}:=w^{(jkl)}_1, \quad \widetilde w^{(jkl)}_{2}=w_2^{(jkl)}\circ F.
\]
Since $F$ is the identity on the boundary and
\[
 \Delta_{g_1}\widetilde v^{(j)}=\Delta_{c^{-1}\s F^*g_2}\widetilde v^{(j)}=c\s \Delta_{F^*g_2}v_2^{(j)}\circ F=c\s F^*(\Delta_{g_2}v_2)=0,
\]
we see that both $v^{(j)}_1$ and $v_2^{(j)}\circ F$ satisfy the same equation $\Delta_{g_1}v=0$ on $\Sigma_1$ and have the same boundary value $f_j$. Here we used that the Laplace-Beltrami operator in dimension $2$ is conformally invariant. By uniqueness of solutions, we thus have
\[
 v^{(j)}=\widetilde v^{(j)}.
\]

 By Lemma \ref{Lem:Integral identity_3rd}, the associated integral identities of the third order linearizations on $\Sigma_\beta$ are  
		\begin{align}\label{third_integral_id_proof}
			\begin{split}
				&\int_{\p \Sigma} f_m \s \p^3 _{\eps_j \eps_k \eps_l}\big|_{\epsilon=0} \Lambda_{g_\beta} (f_\epsilon) \, dS_{g_\beta} \\ %- \int_{\p\Sigma} w_\beta^{(jkl)}\s \p_\nu v_\beta^{(m)} \s dS_{g_\beta}  \\
				&\qquad+\int_{\p \Sigma}v_\beta^{(m)}\Big[\big(\p_{\nu_\beta} v_\beta^{(j)}\big)\s g_\beta(\nabla v_\beta^{(l)},\nabla  v_\beta^{(k)}) +\big(\p_{\nu_\beta}  v_\beta^{(k)}\big)\s g_\beta(\nabla v_\beta^{(l)}, \nabla  v_\beta^{(j)}) +\big(\p_{\nu_\beta}  v_\beta^{(l)}\big)\s g_\beta(\nabla v_\beta^{(j)}, \nabla  v_\beta^{(k)})\Big]dS_{g_\beta} \\
				&=\int_{\Sigma_\beta}\Big[ g_\beta(\nabla v_\beta^{(m)}, \nabla v_\beta^{(j)})g_\beta(\nabla v_\beta^{(l)}, \nabla  v_\beta^{(k)}) +g_\beta(\nabla v_\beta^{(m)},\nabla v_\beta^{(k)})g_\beta(\nabla v_\beta^{(l)}, \nabla  v_\beta^{(j)}) \\
				&\qquad\qquad\qquad\qquad\qquad\qquad\qquad\qquad\qquad+g_\beta(\nabla v_\beta^{(m)},\nabla v_\beta^{(l)})g_\beta(\nabla v_\beta^{(j)}, \nabla  v_\beta^{(k)})\Big]dV_{g_\beta}.
			\end{split}
		\end{align}
		Here we used $w^{(jkl)}_\beta|_{\p \Sigma}=0$. 
		We change variables by using the conformal mapping $F$ on the right-hand side of \eqref{third_integral_id_proof} with $\beta = 2$. 
		Using also $ v^{(j)}=v_2^{(j)}\circ F$, the right-hand side becomes
\begin{align*}
 \begin{split}
 &\int_{\Sigma_1}F^*\Big[ g_2(\nabla v_2^{(m)}, \nabla v_2^{(j)})\s g_2(\nabla v_2^{(l)}, \nabla  v_2^{(k)}) +g_2(\nabla v_2^{(m)},\nabla v_2^{(k)})\s g_2(\nabla v_2^{(l)}, \nabla  v_2^{(j)}) \\
				&\qquad\qquad\qquad\qquad\qquad\qquad\qquad\qquad\qquad+g_2(\nabla v_2^{(m)},\nabla v_2^{(l)})\s g_2(\nabla v_2^{(j)}, \nabla  v_2^{(k)})\Big]F^*dV_{g_2} \\
				&=\int_{\Sigma_1}\Big[ F^*g_2(\nabla  v^{(m)}, \nabla v^{(j)})\s F^*g_2(\nabla  v^{(l)}, \nabla    v ^{(k)}) +F^*g_2(\nabla  v^{(m)},\nabla  v^{(k)})\s F^*g_2(\nabla  v^{(l)}, \nabla  v^{(j)}) \\
				&\qquad\qquad\qquad\qquad\qquad\qquad\qquad\qquad\qquad+F^*g_2(\nabla v_\beta^{(m)},\nabla  v^{(l)})\s F^*g_2(\nabla  v^{(j)}, \nabla  v^{(k)})\Big]c\s dV_{g_1} \\
				&=\int_{\Sigma_1}c^{-1}\Big[ g_1(\nabla  v^{(m)}, \nabla v^{(j)})\s g_1(\nabla  v^{(l)}, \nabla    v ^{(k)}) +g_1(\nabla  v^{(m)},\nabla  v^{(k)})\s g_1(\nabla  v^{(l)}, \nabla  v^{(j)}) \\
				&\qquad\qquad\qquad\qquad\qquad\qquad\qquad\qquad\qquad+g_1(\nabla v_\beta^{(m)},\nabla  v^{(l)})\s g_1(\nabla  v^{(j)}, \nabla  v^{(k)})\Big]dV_{g_1}.
\end{split}
\end{align*}
Here we used $F^*[g_2(\eta,\sigma)]=c^{-1}g_1(F^*\eta, F^*\sigma)$ for $1$-forms $\eta,\sigma\in T^*\Sigma_2$. Thus the power of $c$ is indeed $-1$ as a result of the two minus one powers coming from the gradient terms and plus one power coming from the volume form.

Let us then consider the two terms on the left hand side of \eqref{third_integral_id_proof}. Since $F$ is conformal and $F|_{\p\Sigma}=\text{Id}$ and $c|_{\p \Sigma}=1$, we have $dS_{g_1}=dS_{g_2}$. Since also $F_*\nu_1=\nu_2$, we have $\p_{\nu_1}=\p_{\nu_2}$. Since $\Lambda_{g_1}=\Lambda_{g_2}$, 
\[
 \p^3 _{\eps_j \eps_k \eps_l}\big|_{\epsilon=0} \Lambda_{g_1} (f_\epsilon)=\p^3 _{\eps_j \eps_k \eps_l}\big|_{\epsilon=0} \Lambda_{g_2} (f_\epsilon). 
\]
Using these, the left hand side of \eqref{third_integral_id_proof} for $\beta=2$ reads
\begin{align*}
 &\int_{\p \Sigma} f_m \s \p^3 _{\eps_j \eps_k \eps_l}\big|_{\epsilon=0} \Lambda_{g_1} (f_\epsilon) \, dS_{g_1} \\ %- \int_{\p\Sigma} w_\beta^{(jkl)}\s \p_\nu v_\beta^{(m)} \s dS_{g_\beta}  \\
				&\qquad+\int_{\p \Sigma}v^{(m)}\Big[\big(\p_{\nu_1} v^{(j)}\big)\s g_1(\nabla v^{(l)},\nabla  v^{(k)}) +\big(\p_{\nu_1}  v^{(k)}\big)\s g_1(\nabla v^{(l)}, \nabla  v^{(j)}) +\big(\p_{\nu_1}  v^{(l)}\big)\s g_1(\nabla v_1^{(j)}, \nabla  v_1^{(k)})\Big]dS_{g_1},
\end{align*}
which is also exactly the left hand side of \eqref{third_integral_id_proof} for $\beta=1$. 

We write from now on 
\[
 g=g_1, \quad dV=dV_{g_1}, \quad \Sigma=\Sigma_1 \ \text{ and } \ Q= 1-c^{-1}.
\]
By subtracting the integral identities \eqref{third_integral_id_proof} on $\Sigma_1$ and $\Sigma_2$, we conclude that $\Lambda_{g_1}=\Lambda_{g_2}$ implies
\begin{align}\label{eq:integral_id_zero}
 \begin{split}
 0&=\int_{\Sigma}Q\s \Big[ g(\nabla  v^{(m)}, \nabla v^{(j)})\s g(\nabla  v^{(l)}, \nabla    v ^{(k)}) +g(\nabla  v^{(m)},\nabla  v^{(k)})\s g(\nabla  v^{(l)}, \nabla  v^{(j)}) \\
				&\qquad\qquad\qquad\qquad\qquad\qquad\qquad\qquad\qquad+g(\nabla v_\beta^{(m)},\nabla  v^{(l)})\s g(\nabla  v^{(j)}, \nabla  v^{(k)})\Big]dV_{g}.
\end{split}
\end{align}
The proof of Theorem \ref{t1} is completed by Proposition \eqref{prop:density}, which shows that 
\[
Q\equiv 0.
\]

The proof of Proposition \eqref{prop:density} uses the basic concepts of the calculus on Riemannian surfaces, which we now recall. In isothermal coordinates $(x,y)$ the Riemannian metric is given by $\gamma(x,y)(dx^2+dy^2)$, where $\gamma$ is smooth function. In the holomorphic coordinates $z=x+iy$, the metric can be written as
\[
 g(z)=\gamma(z)\s dz\s d\overline{z},
\]
where $dz=dx+idy$ and $d\overline{z}=dx-idy$. We will use the terms isothermal coordinates and holomorphic coordinates interchangeably.

Let $f$ be a complex valued function on the Riemannian surface. The operators $\p$ and $\overline{\p}$ are defined by
\[
\p=(\p_x-i\p_y)/2 \quad \text{and} \quad \overline{\p}=(\p_x+i\p_y)/2.
 %\p f= \p_zf\s dz \text{ and } \overline{\p} f= \p_{\overline{z}}f\s d\overline{z},
\]
%where $\p_z=(\p_x-i\p_y)/2$ and $\p_{\overline{z}}=(\p_x+i\p_y)/2$. 
Especially, if $u$ and $v$ are complex valued functions, and $(\nabla u)^a=g^{ab}\p_bu$ and $(\nabla v)^a=g^{ab}\p_bv$ are their Riemannian gradients, then in the holomorphic coordinates
\[
 g(\nabla u, \nabla v)=2\gamma^{-1}(\p u\overline{\p} v +\overline{\p} u\p v).
\]
If $f_{\textrm{hol}}$ is holomorphic, then by Cauchy-Riemann equations
\[
 \overline{\p}f_{\textrm{hol}}\equiv 0 \quad \text{and} \quad \p \overline{f}_{\textrm{hol}}\equiv 0.
\]
The Riemannian volume form $dV_g=\abs{g}^{1/2}dxdy$ can be written as
\[
 \gamma(z)\frac{i}{2}dz\wedge d\overline{z}.
\]
We will write $dz\wedge d\overline{z}$ simply as $dzd\overline{z}$. The Laplacian acting on functions reads
\[
 \Delta f=-2i\gamma^{-1}\overline{\p}\p f.
\]

\begin{proposition}\label{prop:density}
 Let $(\Sigma,g)$ be a $2$-dimensional smooth Riemannian manifold with boundary $\p \Sigma$. Let $v^{(j)}$, $j=1,\ldots,4$, solve $\Delta_gv^{(j)}=0$ in $\Sigma$ and have boundary value $f_j$. Assume that \eqref{eq:integral_id_zero} holds for $Q\in C^{\infty}(\Sigma)$ and for any $f_j\in C^\infty(\p \Sigma)$. If all derivatives of $Q$ %up to order 6 
vanish on the boundary, then $Q\equiv 0$.
 \end{proposition}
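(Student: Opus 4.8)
The plan is to work in isothermal coordinates and exploit the fact that the first-order linearizations $v^{(j)}$ are harmonic, so their complex derivatives $\partial v^{(j)}$ can be taken to be essentially arbitrary holomorphic functions (after choosing suitable boundary data $f_j$). First I would rewrite the integrand of \eqref{eq:integral_id_zero} in holomorphic coordinates using $g(\nabla u,\nabla v)=2\gamma^{-1}(\partial u\,\overline\partial v+\overline\partial u\,\partial v)$ and $dV_g=\gamma\frac{i}{2}dz\,d\overline z$. The key simplification is to choose the four boundary values so that $v^{(1)},v^{(2)}$ have holomorphic complex derivatives and $v^{(3)},v^{(4)}$ have antiholomorphic ones — concretely, take $v^{(1)}=v^{(2)}=\re h_1$ type solutions with $\partial v$ holomorphic, and similarly arrange the others so that cross terms collapse. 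With such a choice, each factor $g(\nabla v^{(a)},\nabla v^{(b)})$ reduces to a single product like $2\gamma^{-1}\partial v^{(a)}\,\overline{\partial v^{(b)}}$, the three symmetrized terms combine, and the $\gamma$ powers cancel against the volume form, leaving an identity of the schematic form
\[
 \int_\Sigma Q\,\gamma^{-1}\,\Phi_1\Phi_2\,\overline{\Psi_1\Psi_2}\;i\,dz\,d\overline z = 0
\]
for all holomorphic $\Phi_1,\Phi_2,\Psi_1,\Psi_2$ (with prescribed boundary behavior coming from the $f_j$).

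Next, since products of holomorphic functions are again holomorphic, this says that $\int_\Sigma Q\gamma^{-1} F\,\overline G\, i\,dz\,d\overline z=0$ for all $F,G$ in a large class of holomorphic functions on $\Sigma$ (those arising as such products, or limits thereof). The strategy is then a density/Runge-type argument: one shows that such products $F\overline G$ span a dense enough subspace of functions on $\Sigma$ that the only $Q$ (with all boundary derivatives vanishing) annihilating all of them is $Q\equiv 0$. Concretely, I would fix a point $p\in\Sigma^{\mathrm{int}}$, choose holomorphic functions highly concentrated near $p$ (for instance using a local holomorphic coordinate and functions like $z^N$ rescaled, or Green's-function/Cauchy-kernel type constructions with poles approaching $p$ from outside $\Sigma$), and extract from the vanishing integral that $Q(p)=0$; localizing at every interior point gives $Q\equiv 0$ by continuity. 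The hypothesis that all derivatives of $Q$ vanish on $\partial\Sigma$ is what lets one extend $Q$ by zero across the boundary (to a slightly larger surface) so that the holomorphic test functions need only be defined on a neighborhood, removing boundary-regularity obstructions and enabling the concentration argument.

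The main obstacle I anticipate is the density step: producing enough holomorphic functions on a general (possibly high-genus, multiply-connected) Riemannian surface with boundary, and controlling the products $F\overline G$ well enough to separate points. On a disk this is elementary (polynomials in $z$), but on a general $\Sigma$ one must invoke something like the Riemann–Roch theorem or explicit Green's function constructions to get holomorphic functions with prescribed leading singularities at chosen interior points, and then argue that the span of $\{F\overline G\}$ is dense in, say, $L^1$ against the fixed weight $Q\gamma^{-1}$. A clean route is: extend $Q$ by $0$ to an open surface $\Sigma'\supset\supset\Sigma$ (possible by the vanishing-derivatives hypothesis), pick an interior point $p$ where $|Q|$ is maximal, use a local coordinate $z$ centered at $p$ and test functions of the form $F=G=(z-z_0)^{-1}$ with $z_0$ just outside $\Sigma'$ or $F=e^{N\phi}$ for a suitable holomorphic phase $\phi$ peaking at $p$; a stationary-phase / Laplace-method estimate of the integral then forces $Q(p)=0$. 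I would also need to double-check that the admissible boundary data $f_j$ (smooth, and small enough for the well-posedness regime) still give enough freedom in $\partial v^{(j)}$ — harmonic extension is a bijection onto harmonic functions, so any holomorphic derivative data with appropriate boundary trace is attainable, and smallness is irrelevant here since \eqref{eq:integral_id_zero} is multilinear and homogeneous in the $v^{(j)}$, allowing rescaling.
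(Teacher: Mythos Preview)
Your strategy is essentially the paper's: pass to isothermal coordinates, take $v^{(1)},v^{(2)}$ holomorphic and $v^{(3)},v^{(4)}$ antiholomorphic so that the symmetrized integrand collapses to $8\gamma^{-2}\,\partial v^{(1)}\partial v^{(2)}\,\overline{\partial}v^{(3)}\overline{\partial}v^{(4)}$, and then localize by a stationary-phase argument with test functions of the form $e^{i\tau\Phi}$. Two points to sharpen. First, the ``$\mathrm{Re}\,h_1$ type'' suggestion does not give the cancellation you want: for a \emph{real} harmonic $v$ one has $\overline{\partial}v=\overline{\partial v}\neq 0$, so cross terms survive. You must take the $v^{(j)}$ themselves to be (complex) holomorphic or antiholomorphic; this is legitimate because \eqref{eq:integral_id_zero} is multilinear in the $v^{(j)}$ and hence extends from real to complex boundary data by polarization. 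Second, the step you flag as the main obstacle is exactly where the paper invokes outside input: rather than a Runge/density argument for products $F\overline G$, the paper uses the Guillarmou--Tzou construction to produce, for a dense set of interior points $P$, a global Morse holomorphic phase $\Phi$ with $\Phi(P)=\partial\Phi(P)=0$ together with a holomorphic amplitude $a$ equal to $1+O(|z|^N)$ near $P$ and vanishing to high order at every other critical point of $\Phi$. With $v^{(1)}=v^{(2)}=e^{i\tau\Phi}a$ and $v^{(3)}=v^{(4)}=e^{i\tau\overline\Phi}\overline a$, stationary phase at $P$ extracts $Q(P)$ as the leading coefficient in $\tau$, while the amplitude kills the contributions of the other critical points and the hypothesis on $Q$ at $\partial\Sigma$ disposes of boundary terms in the repeated integrations by parts. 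Your alternative routes (Cauchy kernels with poles approaching from outside, or a bare density argument for $\{F\overline G\}$) are plausible on a disk but would require substantial additional work on a general surface; the CGO/stationary-phase route is the one that goes through cleanly here.
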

\begin{proof}
We use results of \cite{guillarmou2011calderon} to construct the harmonic functions $v^{(j)}$, $j=1,\ldots,4$, that we intend to use in \eqref{eq:integral_id_zero}.

 Let $P\in\Sigma$ be an interior point such that there exists a Morse holomorphic function $\Phi:\Sigma\to\mathbb{C}$, with $\Phi(P)=\partial\Phi(P)=0$, and with finitely many critical points in $\Sigma$. By \cite[Proposition 2.3.1]{guillarmou2011calderon} the set of points such as $P$ is dense in $\Sigma$. For large $N\in\mathbb{N}$, by \cite[Lemma 2.2.4]{guillarmou2011calderon}, there exists a holomorphic function $a:\Sigma\to\C$ such that in complex coordinates around $P$ (which we take to have coordinates $z=0$) we have
\begin{equation}\label{eq:a_at_interesting_critival_point}
a(z)=1+O(|z|^N),
\end{equation}
and also vanishes to order $N$ at all other critical points of $\Phi$.
We choose 
\begin{equation*}
v^{(1)}=v^{(2)}= e^{i\tau\Phi}a,\quad v^{(3)}=v^{(4)}= e^{i\tau\bar\Phi}\bar a,
\end{equation*}
with a large parameter $\tau>0$. These are harmonic on $\Sigma$ as they are products of holomorphic functions.

Using local isothermal coordinates, we can pick a chart around $P$ so that it corresponds to $z=0$, does not contain any part of the boundary, and in this chart
\begin{equation*}
g(\nabla  v^{(j)}, \nabla  v^{(k)})=2\gamma^{-1}(\partial v^{(j)}\bar\partial v^{(k)}+\bar\partial v^{(j)}\partial v^{(k)}),
\end{equation*}
where $\gamma$ is a smooth nonvanishing conformal factor.
Since $\Phi$ is both holomorphic and Morse, and $\Phi(P)=\partial\Phi(P)=0$, we have that
\begin{equation*}
\Phi(z)=\lambda z^2+O(|z|^3),
\end{equation*}
where $\lambda\in\mathbb{C}\setminus\{0\}$. We will rescale $\Phi$ so that $\lambda=1$.

The whole $\Sigma$ may be covered with finitely many local isothermal coordinate charts. Let $\varphi_\alpha$, $\alpha=0,\ldots, M$, be a partition of unity associated with this covering, where the labeling can be chosen so that the chart containing $P$ corresponds to $\alpha=0$. In any of these charts, we have that
\begin{multline*}
g(\nabla  v^{(1)}, \nabla v^{(2)})\s g(\nabla  v^{(3)}, \nabla    v ^{(4)}) +g(\nabla  v^{(1)},\nabla  v^{(3)})\s g(\nabla  v^{(2)}, \nabla  v^{(4)})
				+g(\nabla v^{(1)},\nabla  v^{(4)})\s g(\nabla  v^{(2)}, \nabla  v^{(3)})\\
=8\gamma^{-2}e^{2i\tau(\Phi+\bar\Phi)}(\partial a+i\tau a\partial\Phi)^2\overline{(\partial a-i\tau a\partial\Phi)}^2.
\end{multline*}
Here we used that $g(\nabla  v^{(1)}, \nabla v^{(2)})=0$ by holomorphicity.
For $\alpha=1,\ldots,M$ it is not hard to see that there exists an order $N'\in\mathbb{N}$ so that
\begin{equation}\label{eq:high_decay}
\int Q\gamma^{-2}\varphi_\alpha e^{2i\tau(\Phi+\bar\Phi)}(\partial a+i\tau a\partial\Phi)^2\overline{(\partial a-i\tau a\partial\Phi)}^2 dV_g=O(\tau^{-N'}).
\end{equation}
Here $dV_g=\gamma(z)\frac{i}{2}dzd\overline{z}=\gamma(z)dxdy$. 
Indeed, there are two possible cases: either $\Phi$ does not have a critical point on the chart $\alpha$, or it does. If there is no critical point then for any smooth function $f$ supported on the chart we have by integration by parts that
\begin{multline*}
\tau^{N'}\int f e^{2i\tau(\Phi+\bar\Phi)}  dzd\overline{z}=\int f\left[(2i\partial\Phi)^{-1}\partial\right]^{N'}e^{2i\tau(\Phi+\bar\Phi)}  dzd\overline{z}\\= 2^{-N'}i^{N'}\int e^{2i\tau(\Phi+\bar\Phi)} \partial \left[(\partial\Phi)^{-1}\partial\right]^{N'-1}\left(\partial\Phi\right)^{-1}f dzd\overline{z},
\end{multline*}
which implies the claim in this case. If $\Phi$ does have a critical point at a point different than $P$ and $f$ is as above, but with the additional property that it vanishes to a very high order at the critical point, then the above computation is still valid. It follows that \eqref{eq:high_decay} holds also in this case. Note that the presence of the boundary in either of these two cases does not affect the result since we are assuming all the derivatives of $Q$ vanish on the boundary, so the integrations by parts can still be carried out in the same way.

For $\alpha=0$, by the stationary phase theorem (e.g. see \cite[Theorem 7.7.5]{hormander2015analysis}), we have that
\begin{multline*}
\int Q\gamma^{-2}\varphi_0e^{2i\tau(\Phi+\bar\Phi)}(\partial a+i\tau a\partial\Phi)^2\overline{(\partial a-i\tau a\partial\Phi)}^2 dV_g\\
=\tau^4\int Q\gamma\varphi_0e^{2i\tau(\Phi+\bar\Phi)}|\partial\Phi|^4 dxdy+O(\tau^{-N'})\\
=-2\pi\tau Q(P)\gamma(P)\varphi_0(P)+O(\tau^{0}).
\end{multline*}
Here we also used \eqref{eq:a_at_interesting_critival_point} in the first equality. By \eqref{eq:integral_id_zero} it follows that $Q(P)=0$ and, since the set of points $P$ for which the above argument applies is dense in $\Sigma$, we have that $Q\equiv0$.
\end{proof}

The following proposition concludes the proof of Theorem \ref{t1}.
\begin{theorem}\label{prop_bd_det}
% Let $(\Sigma,g)$ be a $2$-dimensional smooth Riemannian manifold with boundary $\p \Sigma$. Let $v^{(j)}$, $j=1,\ldots,4$, solve $\Delta_gv^{(j)}=0$ in $\Sigma$ and have boundary values $f_j$. Assume that \eqref{eq:integral_id_zero} holds for $Q\in C^{\infty}(\Sigma)$ and for any $f_j\in C^\infty(\p \Sigma)$. 
If $Q$ is as in the statement of Proposition \ref{prop:density}, then all derivatives of $Q$ vanish on the boundary.
 \end{theorem}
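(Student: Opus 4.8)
The plan is to prove the stronger statement by induction on the order of derivatives, using a localized version of the density/stationary-phase argument from Proposition \ref{prop:density} applied near a boundary point, together with the known boundary conditions $c|_{\partial\Sigma}=1$ (hence $Q|_{\partial\Sigma}=0$) and $\partial_\nu$-information coming from $F_*\nu_1=\nu_2$. The key idea is that boundary determination results for the linearized (Laplace) problem are classical (Lee--Uhlmann, Kohn--Vogelius), but here we must extract them from the third-order integral identity \eqref{eq:integral_id_zero} rather than from a linear DN map. So the first step is to fix a boundary point $p\in\partial\Sigma$, choose boundary normal (semigeodesic) coordinates $(x,y)$ near $p$ with $y\geq 0$ the distance to the boundary, and pass to isothermal coordinates $z=x+iy$ adapted near $p$ so that the boundary is (locally) a coordinate line; one then wants to plug in harmonic functions $v^{(j)}$ that concentrate at $p$ as a large parameter $\tau\to\infty$.

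Next I would construct suitable concentrating harmonic functions localized at the boundary point. Following \cite{guillarmou2011calderon}-type constructions (or the classical complex-geometrical-optics boundary construction), take a holomorphic phase $\Phi$ with a critical point at $p$ on the boundary, of the form $\Phi(z)=\lambda z^2+O(|z|^3)$ in the adapted coordinates, together with holomorphic amplitudes $a$ normalized as in \eqref{eq:a_at_interesting_critival_point}, and set $v^{(1)}=v^{(2)}=e^{i\tau\Phi}a$, $v^{(3)}=v^{(4)}=e^{i\tau\bar\Phi}\bar a$ exactly as in the proof of Proposition \ref{prop:density}. These are harmonic; the extra care needed compared to the interior case is that the boundary values $f_j=v^{(j)}|_{\partial\Sigma}$ are now nontrivial oscillatory data, but that is allowed since the integral identity \eqref{eq:integral_id_zero} holds for all $f_j\in C^\infty(\partial\Sigma)$. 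Inserting these into \eqref{eq:integral_id_zero} and using holomorphicity (so that $g(\nabla v^{(1)},\nabla v^{(2)})=0$ etc. just as in Proposition \ref{prop:density}) reduces the right-hand side to $\int_\Sigma Q\,\gamma^{-2}e^{2i\tau(\Phi+\bar\Phi)}(\partial a+i\tau a\partial\Phi)^2\overline{(\partial a-i\tau a\partial\Phi)}^2\,dV_g$, and the partition-of-unity argument kills all contributions away from $p$ with rapid decay. Near $p$, since the critical point now sits \emph{on} the boundary, one applies the stationary-phase / Laplace method on the half-space $\{y\geq 0\}$: the leading asymptotics in $\tau$ produce a sum of boundary integrals of the form $\int_{\partial\Sigma} (\partial_\nu^k Q)(p)\,(\ldots)\,$ times explicit powers of $\tau$.

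The induction then runs as follows. The leading term forces $Q(p)=0$; since boundary points $p$ admitting such a construction are dense in $\partial\Sigma$ (again by \cite[Proposition 2.3.1]{guillarmou2011calderon}, localized), $Q|_{\partial\Sigma}=0$ — which we already knew from $c|_{\partial\Sigma}=1$, providing a consistency check. Assuming inductively that $Q,\partial_\nu Q,\dots,\partial_\nu^{m-1}Q$ all vanish on $\partial\Sigma$, Taylor-expand $Q$ in the normal variable $y$, so that $Q(x,y)=\frac{y^m}{m!}\partial_\nu^m Q(x,0)+O(y^{m+1})$; the next nonvanishing term in the stationary-phase expansion of the localized integral is then proportional to $\partial_\nu^m Q(p)$ times a nonzero universal constant and a definite power of $\tau$, and \eqref{eq:integral_id_zero} forces it to vanish, hence $\partial_\nu^m Q(p)=0$ for $p$ in a dense subset of $\partial\Sigma$, hence on all of $\partial\Sigma$ by continuity. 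Since a smooth function all of whose normal derivatives vanish on the boundary — combined with vanishing of all tangential derivatives, which follows because $\partial_\nu^m Q\equiv 0$ on $\partial\Sigma$ implies all tangential derivatives of $\partial_\nu^m Q$ vanish too — has vanishing full jet on $\partial\Sigma$, we conclude all derivatives of $Q$ vanish on $\partial\Sigma$.

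The main obstacle is the boundary stationary-phase analysis: unlike the interior case in Proposition \ref{prop:density} where \cite[Theorem 7.7.5]{hormander2015analysis} applies directly, here the phase $\Phi+\bar\Phi = 2\,\mathrm{Re}\,\Phi$ has its critical point on the boundary of the integration domain, so one must either use a half-space stationary-phase lemma, or reflect/extend $Q$ suitably (which is legitimate precisely because we are proving, inductively, that lower-order normal derivatives vanish), and one must track carefully which power of $\tau$ pairs with $\partial_\nu^m Q(p)$ and check that the associated constant is nonzero — this requires choosing the amplitude $a$ and, if necessary, several independent choices of the phase $\Phi$ (i.e.\ several values of $\lambda$, or auxiliary linear holomorphic factors) to separate the tangential and normal Taylor coefficients of $Q$ and to guarantee the relevant coefficient does not accidentally vanish. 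Ensuring the construction of \cite{guillarmou2011calderon} can be carried out with the critical point placed at a prescribed (dense set of) boundary points, rather than interior points, is the other technical point that needs to be verified.
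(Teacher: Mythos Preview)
Your approach differs substantially from the paper's and has a real gap that you flag but do not resolve.

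The paper does \emph{not} try to place a Morse critical point of a holomorphic phase on the boundary. It abandons the Guillarmou--Tzou construction for the boundary step and instead uses Kang--Nakamura approximate solutions \cite{kang2002boundary}: in boundary-normal coordinates near $P\in\partial\Sigma$ one builds $\Psi_N\approx e^{iNx^1}e^{-\kappa N x^2}\eta(N^\alpha x^1)$, cuts off in the normal direction, and corrects to an exact harmonic function $u_N=\Psi_N+R_N$ with $\|R_N\|_{H^2}=O(N^{3/2-m-\lambda-\alpha/2})$ for a freely chosen parameter $m$. Taking $v^{(1)}=v^{(2)}=u_N$, $v^{(3)}=v^{(4)}=\overline{u_N}$ in \eqref{eq:integral_id_zero}, the principal term is $\sim N^4|\Psi_N|^4$; integrating it against $Q\sim (x^2)^k\partial_\nu^k Q(P)/k!$ gives a lower bound $\gtrsim N^{3-k-\alpha}$, while all remainders (including those from $R_N$) are $O(N^{4-m-\lambda-\alpha})$. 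The contradiction for $k\le m-1$ drives the induction. Concentration comes from the exponential decay $e^{-\kappa N x^2}$, not from a stationary-phase critical point, so no half-space stationary-phase lemma is ever invoked.

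The gap in your plan is exactly the step you defer to the last paragraph: \cite[Proposition~2.3.1, Lemma~2.2.4]{guillarmou2011calderon} are proved for \emph{interior} points, and producing a Morse holomorphic function on $\Sigma$ with a prescribed boundary critical point is not available there --- one would presumably have to double the surface and argue on the closed double, which is additional work you have not outlined. Even granting that, the stationary-phase expansion with the critical point on the boundary of the integration domain is not covered by \cite[Theorem~7.7.5]{hormander2015analysis}, and checking that the coefficient multiplying $\partial_\nu^m Q(P)$ at the relevant power of $\tau$ is nonzero is a genuine computation you have not carried out. None of this is obviously impossible, but the Kang--Nakamura route is tailor-made for boundary determination and sidesteps all of these issues.
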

\begin{proof}
Let $P\in\partial\Sigma$ and $m\in\mathbb{N}$. We can pick boundary normal coordinates (for example see \cite{Lee-Uhlmann}) in a neighborhood of $P$ so that locally $\Sigma$ is contained in the set
\begin{equation*}
\{(x^1,x^2): x^2\geq0\},
\end{equation*} 
and that $\partial\Sigma$ is contained in the set
\begin{equation*}
\{(x^1,x^2): x^2=0\},
\end{equation*}
and $P$ has coordinates $(0,0)$. In these coordinates the metric $g$ has the form
\begin{equation*}
g(x^1,x^2)=\gamma^{-1}(x^1,x^2)(d x^1)^2+(d x^2)^2,
\end{equation*}
with $\gamma$ a smooth positive function, with a positive lower bound. %Recall that in the dimension 2 the Laplacian is conformally invariant, so any function which is harmonic with respect to the metric $g$ will also be harmonic with respect to the metric $\tilde g= \gamma^{1/3}g$. 
Therefore 
a $g$-harmonic function $v$ satisfies the equation
%\HOX{Lauri: I am still confused by $1/3$. Don't we want to rescale $g$ so that $\det g = 1$? I get $1/2$ as the power. By the way, if you want to get a latex template where margin comments and labels are always on different sides, and never overlapping, let me know}
\begin{equation}\label{local-equation}
\partial_1(\gamma^{\frac{1}{2}}\partial_1 v)+\partial_2(\gamma^{-\frac{1}{2}}\partial_2 v)=0.
\end{equation}

We intend to use special solutions constructed in \cite{kang2002boundary}. Using similar notation to which was used in the reference, let $\lambda,\alpha\in\R_+$ be 
\begin{equation*}
\lambda=\frac{1}{m^2+m+1},\quad\alpha=\frac{m^2+1}{m^2+m+1}.
\end{equation*}
Let $\eta\in C_0^\infty(\R)$ be such that 
\begin{equation*}
0\leq\eta\leq1,\quad \int\eta(y)^2 dy=1,\quad \supp(\eta)\subset (-1,1).
\end{equation*}
%and for any $0\leq a\leq m$ we have
%\begin{equation}
%\int y^a\eta(y)^2 dy\neq0.
%\end{equation}
For $N\in\mathbb{N}$ let
\begin{equation*}
\Omega_N=\{(x^1,x^2)\in\R^2: |x^1|<N^{-\alpha},\, 0\leq x^2<N^{-1/2}\}.
\end{equation*}
It is a particular case of \cite[Lemma 2.1]{kang2002boundary} that there exist approximate solutions $\Phi_N$ of \eqref{local-equation} which are of the form
\begin{equation*}
\Phi_N(x^1,x^2)=e^{iNx^1}e^{-\kappa Nx^2}\sum_{j=0}^{m/\lambda} N^{-j\lambda} v_j(N^\alpha x^1,Nx^2),
\end{equation*}
with $\kappa=\sqrt{\gamma(0)}>0$, $v_0(y_1,y_2)=\eta(y_1)$ and all other $v_j(y_1,y_2)$ are polynomials in $y_2$ with a vanishing zero order term whose coefficients are functions of $y_1$ which belong to $C_0^\infty((-1,1))$. The polynomials are chosen so that on $\Omega_N$ we have
\begin{equation*}
\left|\partial_1(\gamma^{\frac{1}{2}}\partial_1 \Phi_N)+\partial_2(\gamma^{-\frac{1}{2}}\partial_2 \Phi_N)\right|\leq C N^{2-m-\lambda}p(Nx^2)e^{-\kappa Nx^2},
\end{equation*}
where $p(y)$ is a polynomial with positive coefficients.

Let $\zeta:[0,\infty)\to[0,1]$ be a smooth function such that $\zeta(y)=1$ if $0\leq y\leq \frac{1}{2}$ and $\zeta(y)=0$ if $y\geq 1$. We set 
\[
\Psi_N(x^1,x^2):=\zeta(N^{1/2}x^2)\Phi_N(x^1,x^2) 
\]
and we use the same name for the extension by zero of these functions to the entire $\Sigma$. Denoting
\begin{equation*}
\tilde\Omega_N=\{(x^1,x^2)\in\R^2: |x^1|<N^{-\alpha},0\leq x^2<N^{-1/2}/2\}
\end{equation*}
we have that
\begin{equation*}
\left|\partial_1(\gamma^{\frac{1}{2}}\partial_1 \Psi_N)+\partial_2(\gamma^{-\frac{1}{2}}\partial_2 \Psi_N)\right|\leq C N^{2-m-\lambda}p(Nx^2)e^{-\kappa Nx^2},\quad\text{on }\tilde\Omega_N,
\end{equation*}
and
\begin{equation*}
\left|\partial_1(\gamma^{\frac{1}{2}}\partial_1 \Psi_N)+\partial_2(\gamma^{-\frac{1}{2}}\partial_2 \Psi_N)\right|\leq C e^{-\tilde\kappa \sqrt{N}},\quad\text{on }\Omega_N\setminus\tilde\Omega_N,
\end{equation*}
with a $\tilde\kappa>0$. We can then conclude that
\begin{equation*}
\norm{\triangle_g\Psi_N}_{L^2(\Sigma)}\leq C N^{\frac{3}{2}-m-\lambda-\frac{\alpha}{2}}.
\end{equation*}

Let $u_N$ be the harmonic function such that $u_N|_{\partial\Sigma}=\Psi_N|_{\partial\Sigma}$. We can write
\begin{equation*}
u_N=\Psi_N+R_N,
\end{equation*}
where $R_N\in H^2(\Sigma)\cap H^1_0(\Sigma)$ and, by the above estimates and elliptic regularity, we have that
\begin{equation*}
\norm{R_N}_{H^2(\Sigma)}\leq C N^{\frac{3}{2}-m-\lambda-\frac{\alpha}{2}}.
\end{equation*}
By Sobolev embedding we also have that for any $p\in(1,\infty)$
\begin{equation*}
\norm{\nabla R_N}_{L^p(\Sigma)}\leq C N^{\frac{3}{2}-m-\lambda-\frac{\alpha}{2}}.
\end{equation*}

Note that in $\tilde\Omega_N$
\begin{multline*}
\partial_1\Psi_N=iN\Psi_N +N^\alpha e^{iNx^1}e^{-\kappa Nx^2}\sum_{j=0}^{m/\lambda} N^{-j\lambda} (\partial_1 v_j)(N^\alpha x^1,Nx^2)\\
=iN\Psi_N +N^\alpha e^{N(ix^1-\kappa x^2)}\mathcal{G}(N^\alpha x^1, N x^2),
\end{multline*}
and
\begin{multline*}
\partial_2\Psi_N=-\sqrt{\gamma(0)}N\Psi_N +N^{1-\lambda} e^{iNx^1}e^{-\kappa Nx^2}\sum_{j=0}^{m/\lambda-1} N^{-j\lambda} (\partial_2 v_{j+1})(N^\alpha x^1,Nx^2)\\
=-\sqrt{\gamma(0)}N\Psi_N+N^{1-\lambda}e^{N(ix^1-\kappa x^2)}\mathcal{H}(N^\alpha x^1, N x^2).
\end{multline*}
We choose in \eqref{eq:integral_id_zero}
\begin{equation*}
v^{(1)}=v^{(2)}=u_N,\quad v^{(3)}=v^{(4)}=\overline{u_N}.
\end{equation*}
Since
\begin{equation*}
g(\nabla v^{(j)},\nabla v^{(k)})
=\left(\gamma \partial_1 v^{(j)}\partial_1 v^{(k)}+\partial_2 v^{(j)}\partial_2 v^{(k)}\right),
\end{equation*}
we have that in $\tilde\Omega_N$
\begin{multline*}
g(\nabla \Psi_N,\nabla \overline{\Psi_N})
=2N^2\gamma(0)|\Psi_N|^2+\frac{\gamma(x^1,0)-\gamma(0)}{x^1}N^{2-\alpha}(N^\alpha x^1)|\Psi_N|^2\\
+\frac{\gamma(x^1,x^2)-\gamma(x^1,0)}{x^2}N (Nx^2)|\Psi_N|^2
-N^{1+\alpha}\gamma 2i\im\left(\overline{\Psi_N} e^{N(ix^1-\kappa x^2)}\mathcal{G}(N^\alpha x^1, N x^2) \right)\\
+N^{2\alpha}\gamma e^{-2N\kappa x^2}|\mathcal{G}(N^\alpha x^1, N x^2)|^2
-N^{2-\lambda}\kappa 2\re\left(\overline{\Psi_N}e^{N(ix^1-\kappa x^2)}\mathcal{H}(N^\alpha x^1, N x^2)  \right)\\
+N^{2(1-\lambda)}|\mathcal{H}(N^\alpha x^1, N x^2)|^2,
\end{multline*}
and
\begin{multline*}
g(\nabla \Psi_N,\nabla \Psi_N)
=-\frac{\gamma(x^1,0)-\gamma(0)}{x^1}N^{2-\alpha}(N^\alpha x^1)\Psi_N^2\\
-\frac{\gamma(x^1,x^2)-\gamma(x^1,0)}{x^2}N (Nx^2)\Psi_N^2
+N^{1+\alpha}\gamma2i\Psi_N e^{N(ix^1-\kappa x^2)}\mathcal{G}(N^\alpha x^1, N x^2)\\
+N^{2\alpha}\gamma e^{2N(ix^1-\kappa x^2)}\mathcal{G}(N^\alpha x^1, N x^2)^2
-N^{2-\lambda}\kappa2\Psi_N e^{N(ix^1-\kappa x^2)}\mathcal{H}(N^\alpha x^1, N x^2)\\
+N^{2(1-\lambda)}\mathcal{H}(N^\alpha x^1, N x^2)^2.
\end{multline*}

Recall that $u_N=\Psi_N+R_N$. Note that by \eqref{eq:integral_id_zero} and fairly elementary Cauchy-Schwartz estimates for the terms involving the remainders $R_N$ we have
\begin{multline*}
0=\int_\Sigma Q\left(2g(\nabla u_N,\nabla\overline{u_N})^2+|g(\nabla u_N,\nabla u_N)|^2\right)dV_g\\
=\int_\Sigma Q\left(2g(\nabla \Psi_N,\nabla\overline{\Psi_N})^2+|g(\nabla \Psi_N,\nabla \Psi_N)|^2\right)dV_g+O(N^{4-m-\lambda-\alpha})\\
=\int_{\tilde\Omega_N} Q\left(2g(\nabla \Psi_N,\nabla\overline{\Psi_N})^2+|g(\nabla \Psi_N,\nabla \Psi_N)|^2\right)\gamma^{-\frac{1}{2}}dx^1dx^2+O(N^{4-m-\lambda-\alpha}),
\end{multline*}
so
\begin{equation*}
\int_{\tilde\Omega_N} Q\left(2g(\nabla \Psi_N,\nabla\overline{\Psi_N})^2+|g(\nabla \Psi_N,\nabla \Psi_N)|^2\right)\gamma^{-\frac{1}{2}}dx^1dx^2=O(N^{4-m-\lambda-\alpha}).
\end{equation*}

Let $k\in\mathbb{N}$ be such that all derivatives of $Q$ up to order $k-1$ vanish on the boundary $\partial\Sigma$ and suppose that  there exists an $\epsilon>0$ such that 
\begin{equation*}
\epsilon\leq \frac{Q(x^1,x^2)}{(x^2)^k}\leq\epsilon^{-1}
\end{equation*}
in a neighborhood of the point $P$.
Then for large $N$
\begin{multline*}
2N^4\gamma(0)^2\int_{\tilde\Omega_N}\gamma^{-\frac{1}{2}}Q|\Psi_N|^4dx^1dx^2\\
=2N^4\gamma(0)^2\int_{\tilde\Omega_N}\gamma^{-\frac{1}{2}}\frac{Q(x^1,x^2)}{(x^2)^k}N^{-k}(Nx^2)^ke^{-4\kappa Nx^2}\eta(N^\alpha x^1)^4dx^1dx^2+O(N^{3-k-\lambda-\alpha})\\
\geq \delta N^{3-k-\alpha},
\end{multline*}
where $\delta>0$. Similarly absorbing all the other lower order terms in $N$ into the principal term we obtain that
\begin{equation*}
\int_{\tilde\Omega_N} Q\left(2g(\nabla \Psi_N,\nabla\overline{\Psi_N})^2+|g(\nabla \Psi_N,\nabla \Psi_N)|^2\right)\gamma^{-\frac{1}{2}}dx^1dx^2\geq \delta' N^{3-k-\alpha}.
\end{equation*}
This is a contradiction as long as $k\leq m-1$. It follows that in this case all derivatives of $Q$ up to order $k$ vanish at $P$, which was chosen arbitrarily. Proceeding inductively and by choosing arbitrarily large values for $m$ we obtain the result.
\end{proof}

\section*{Appendix A}
In this appendix we recall the proof that the graphs of the solutions of the minimal surface equation
are minimal surfaces. We also derive some formulas, such as  \eqref{area variation}, which are needed in the main text. We start by defining notations. Let
\begin{equation*}%\label{prod_metric}
 \overline g=ds^2 +\sum_{\alpha,\beta=1}^2g_{\alpha\beta}(x)dx^\alpha dx^\beta¸,
\end{equation*}
be the product metric $e\oplus g$ on $M=\R\times \Sigma$. Here $g$ is the metric on $\Sigma$. 
%We will later also assume that $\p_s|_{s=0}g(x,s)=0$, which holds if the metric above corresponds to boundary normal coordinates. (That is, paths $s\mapsto (x,s)$ are geodesics of $\overline g$.) 
Let 
\[
 F(x)=(u(x),x)
\]
be the graph function of $u:\Sigma\to \R$. 
The volume form on the graph $Y=\{(u(x),x):\ x\in \Sigma\}$ of $F$ is given by the determinant of the induced metric on the graph $Y$. 

For simplicity, let us assume that $(x^1,x^2)$ are global coordinates on $\Sigma$. The general case when $\Sigma$ is covered with finitely many coordinate charts can be considered using a suitable partition of unity. Then $(s,x^1,x^2)$ are global coordinates for $\R\times \Sigma$. We also write $s=x^0$ for notational reasons. 
Coordinates on $Y$ are induced by the coordinates on $\Sigma$ and the mapping $F$.
Let $(y^1,y^2)$ be the induced coordinates on $Y$ and let $\p_{y_1}=F_*\p_{x_1}$ and $\p_{x_2}=F_*\p_{x_2}$ be the corresponding coordinate vectors. Here the pushforward $F_*$ of $F$ is given by the formula
\[
 F_*\p_{x_j}=DF_j^a \p_{a},
\]
where $(DF_j^a)$ is the Jacobian matrix of $F$, $j=1,2$ and $a=0,1,2$. Here also $\p_{a}$ are the coordinate vectors of $\R\times \Sigma$. We assume here and below the Einstein summation over the indices of the coordinates $(x^0,x^1,x^2)$ of $\R\times \Sigma$.

Let us denote by $h_{jk}$ the induced metric on $Y$. That is    
\[
 h_{jk}(x)=\overline g_{F(x)}(F_*\p_{x_j},F_*\p_{x_k}), \quad j,k=1, 2, \quad x=(x^1,x^2)\in \Sigma.
\]
%Here the pussforward $F_*$ of $F$ is given by the formula
%\[
% F_*\p_{x_j}=DF_j^a(\p_a),
%\]
%where $a=0,1,2$ and $j=1,2$. 
Note that if $a\neq 0$, then $DF_j^a=\delta_j^a$. We also have $DF_j^0=\p_ju$. It follows that the induced metric on $Y$ reads
\begin{align*}
 h_{jk}&=DF_j^a\s DF_k^b \s \overline{g}_{F(x)}(\p_a,\p_b)=DF_j^0\s DF_k^0 \s\overline g_{00}|_{F(x)}+\sum_{\alpha,\beta=1}^2DF_j^\alpha DF_k^\beta \overline g_{F(x)}(\p_\alpha,\p_\beta) \\
 &=\p_ju(x)\p_ku(x)+\overline{g}_{jk}|_{F(x)}=\p_ju(x)\p_ku(x)+g_{jk}(x).
\end{align*}

The area of the surface $Y$ is 
\[
 \int_\Sigma \sqrt{\det(h(x))}dx^1 \wedge dx^2.
\]
As $h(x) = g(x)+\nabla u\otimes \nabla u=g(x)(I+(g(x)^{-1}\nabla u)\otimes \nabla u)$, we have
\[
 \det(h(x))=\det(g(x))\det\Big(I+(g(x)^{-1}\nabla u)\otimes \nabla u\Big).
\]
By Lemma 1.1 in \cite{Ding} %\url{https://en.wikipedia.org/wiki/Matrix_determinant_lemma} 
we have 
\[
 \det\Big(I+(g(x)^{-1}\nabla u)\otimes \nabla u\Big)=1+(g(x)^{-1}\nabla u) \cdot \nabla u=1+\abs{\nabla u}_{g(x)}^2.
\]
Finally, the area of $Y$ equals
\[
 \text{Area}(Y)=\int_\Sigma \sqrt{1+\abs{\nabla u}^2_{g(x)}}dV_g(x)
\]
where in local coordinates $dV_g(x)=\det(g(x))^{1/2}dx^1\wedge dx^2$.

Let us next compute the minimal surface equation. For that, we consider a variation 
\[
 Y(u+tv):=\{(u(x)+tv(x),x):\ x\in \Sigma\}\subset \R\times M
\]
%Y(u+tv)=\{(u(x)+tv(x),x):\ x\in \Sigma\}\subset \R\times M
of the surface $Y$, where $v:M\to \R$ is a smooth function. We denote the area of $Y(u+tv)$ by $\text{Area}(u+tv)$. 
Then,
\begin{align*}
 &\frac{d}{dt}\Big|_{t=0}\text{Area}(u+tv)=\frac{1}{2}\int_\Sigma \frac 1{\sqrt{1+\abs{\nabla u}^2_{g(x)}}}\frac{d}{dt}\Big|_{t=0}\left(\abs{\nabla (u+tv)}^2_{g(x)}\right)dV_g(x)
\end{align*}
We have
\begin{align*}
 &\frac{d}{dt}\Big|_{t=0}\abs{\nabla (u+tv)}^2_{g(x)}=2(\nabla u, \nabla v)_g .
 %&=2g(x,u)(\nabla u, \nabla v)-vg(x,u)^{-1}\p_sg(x,u)g(x,u)^{-1}(\nabla u,\nabla u)
\end{align*}
Thus
\begin{align}\label{area variation}
 &\frac{d}{dt}\Big|_{t=0}\text{Area}(u+tv)=\int_\Sigma \frac1{\sqrt{1+\abs{\nabla u}^2_{g(x)}}}
 (\nabla u,\nabla v)_g dV_g(x).
\end{align}
We recall that if $Y$ is a minimal surface (in the variational sense), $t=0$  is a critical point
of the map $t\mapsto \text{Area}(u+tv)$ for all functions $v$  that vanish on the boundary.
% \TODO{Lauri: the definition of a minimal surface in the variational sense is not clear. 
% I suppose that we mean by this simply a critical point of the area functional, not a global minimizer.
% Otherwise the the converse in the below lemma is not at all clear to me.
% Matti: Indeed, we use here the definition that a minimal surface is a critical 
% point of the area functional. This seems to be one of the used definitions in the field
% of minimal surfaces. It seems that the question when a critical point is a global minimizer is a complicated question. Maybe we should point out this definition in the introduction of the paper.
% }

We will use the formula \eqref{area variation} in the main text of the paper to consider
the Dirichlet-to-Neumann map for the minimal surface equation \eqref{eq:minimal_surf_intro}.
For the convenience of the reader we use it also the prove the following well known lemma:

\begin{lemma} 
Let $u:\Sigma\to \R$, $u\in C^\infty(\Sigma)$. Then $u$  is a solution of the minimal surface equation \eqref{eq:minimal_surf_intro} if and only if the graph $Y=\{(u(x),x):\ x\in \Sigma\}$  is a minimal surface in the variational sense.

\end{lemma}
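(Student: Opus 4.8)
The plan is to prove the equivalence directly from the first variation formula \eqref{area variation}. The key observation is that \eqref{area variation} was derived for variations $Y(u+tv)$ of the graph in which the deformation $v$ is an arbitrary smooth function on $\Sigma$; to connect this with the definition of a variational minimal surface one must check that such normal graph variations are general enough to detect criticality of the area functional among all compactly supported variations of the submanifold $Y\subset\R\times\Sigma$.

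First I would establish the ``only if'' direction. Assume $u$ solves \eqref{eq:minimal_surf_intro}. Take any $v\in C^\infty(\Sigma)$ with $v|_{\p\Sigma}=0$. Integrating \eqref{area variation} by parts gives
\[
\frac{d}{dt}\Big|_{t=0}\text{Area}(u+tv)=-\int_\Sigma v\,\text{div}_g\!\left[\frac{\nabla u}{\sqrt{1+\abs{\nabla u}^2_{g}}}\right]dV_g + \int_{\p\Sigma}\frac{(\nu,\nabla u)_g}{\sqrt{1+\abs{\nabla u}^2_g}}\,v\,dS_g,
\]
and both terms vanish: the first because $u$ satisfies the PDE, the second because $v|_{\p\Sigma}=0$. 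Hence $t=0$ is a critical point of $t\mapsto\text{Area}(u+tv)$ for all admissible $v$, which is exactly the statement that $Y$ is a minimal surface in the variational sense (as defined in the Appendix).

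Conversely, for the ``if'' direction, suppose $Y$ is variationally minimal, so $\frac{d}{dt}|_{t=0}\text{Area}(u+tv)=0$ for all $v\in C^\infty(\Sigma)$ vanishing on $\p\Sigma$. Using the integrated-by-parts form above, this says $\int_\Sigma v\,\text{div}_g[\nabla u/\sqrt{1+\abs{\nabla u}^2_g}]\,dV_g=0$ for all such $v$. Since $v$ ranges over all smooth functions compactly supported in the interior of $\Sigma$, the fundamental lemma of the calculus of variations forces $\text{div}_g[\nabla u/\sqrt{1+\abs{\nabla u}^2_g}]=0$ in $\Sigma$; together with the prescribed boundary value $u|_{\p\Sigma}=f$, $u$ solves \eqref{eq:minimal_surf_intro}.

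The only genuine subtlety — and the step I would be most careful about — is matching the notion of ``variation'' used in \eqref{area variation} (vertical graph perturbations $u\mapsto u+tv$) with the notion of ``variation of the submanifold $Y$'' used in the Appendix's definition of a variational minimal surface (arbitrary smooth compactly supported ambient deformations). One must argue that criticality under vertical graph variations is equivalent to criticality under all variations; this follows because any compactly supported normal variation of $Y$ can, to first order, be written as a vertical graph variation up to reparametrization (tangential components of a variation field do not change the area to first order), and conversely vertical variations are a special case. I would state this reduction explicitly, citing the standard fact that the first variation of area depends only on the normal component of the variation field, so that the class of graph variations $\{u+tv : v\in C_c^\infty(\mathrm{int}\,\Sigma)\}$ already exhausts the relevant directions.
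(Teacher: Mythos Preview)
Your argument is correct and is essentially the paper's own proof: both integrate the first variation formula \eqref{area variation} by parts and invoke the fundamental lemma of the calculus of variations, the only cosmetic difference being that the paper writes out the computation in local coordinates on a chart $U$ (and then covers $\Sigma$), whereas you use the global $\text{div}_g$ notation. Your final paragraph about reconciling graph variations with general ambient variations is unnecessary here, since the Appendix \emph{defines} ``minimal surface in the variational sense'' precisely by criticality of $t\mapsto\text{Area}(u+tv)$ for all $v\in C^\infty(\Sigma)$ vanishing on $\p\Sigma$; there is no broader class of variations to match.
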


\begin{proof}
Assume that $Y=\{(u(x),x):\ x\in \Sigma\}$ is a minimal surface in the variational sense.
Then, $\frac{d}{dt}\Big|_{t=0}\text{Area}(u+tv_0)=0$  when 
 $v_0\in C^\infty_0(\Sigma)$ is a smooth function having the vanishing 
boundary values. Let $U\subset \Sigma$ be an open set where 
$X(x)=(x^1(x),x^2(x))$ are local coordinates satisfying $X(U)\subset \R\times [0,\infty)$ and
$X(\p \Sigma)\subset \R\times \{0\}$. Let us assume that $v_0$ is supported on $X(U)$ 
and  $v_0(x)=0$ for $x\in X(\p \Sigma)$.
%The general case when $Y$ is covered with finitely many coordinate charts can be 
%considered using a suitable partition of unity.

 Then, applying integration by parts in formula  \eqref{area variation} with $v=v_0$ yields
\begin{align*}
 &0=\frac{d}{dt}\Big|_{t=0}\text{Area}(u+tv_0)=\int_{X(U)} \sum_{\alpha,\beta=1}^2\frac{\det(g(x))^{1/2}}{\sqrt{1+\abs{\nabla u}^2_{g(x)}}}
 g^{\alpha\beta}\p_\alpha u\, \p_\beta v_0 dx^1\wedge dx^2\\
 &=-\int_{X(U)}  \det(g(x))^{-1/2}\p_k \bigg( \frac{\det(g(x))^{1/2}}{\sqrt{1+\abs{\nabla u}^2_{g(x)}}}
 g^{jk}\p_j u\bigg) v_0\, \det(g(x))^{1/2} dx^1\wedge dx^2.
\end{align*}
As $v_0\in C^\infty_0(\Sigma)$ is here arbitrary function, we see that $u$  satisfies
\begin{align}\label{minimal surface in local coord.}
 &\sum_{\alpha,k=1}^2\det(g(x))^{-1/2}\p_\alpha \bigg( \frac{\det(g(x))^{1/2}}{\sqrt{1+\abs{\nabla u}^2_{g(x)}}}
 g^{\alpha\beta }(x)\p_\beta u(x)\bigg)=0, \quad x\in {X(U)}.
\end{align}
Thus $u$  is a solution of the equation \eqref{eq:minimal_surf_intro} on $U$. As $U$ is an
arbitrary coordinate neighborhood of $\Sigma$, we see that $u$  is a solution of the equation \eqref{eq:minimal_surf_intro} on $\Sigma$.
The converse statement can be obtained in the same way.
\end{proof}

\bibliographystyle{alpha}
\bibliography{ref}
\end{document}